\numberwithin{equation}{section}
\newcommand{\Div}{\divergence}
\newcommand{\R}{\mathbb R}
\newcommand{\N}{\mathbb N}
\newcommand{\E}{\mathbb E}
\newcommand{\p}{\mathbb P}
\newcommand{\F}{\mathfrak F}
\newcommand{\A}{\mathcal A}
\newcommand{\dd}{\mathrm d}
\newcommand{\dx}{\, \mathrm{d}x}
\newcommand{\ds}{\, \mathrm{d}\sigma}
\newcommand{\dt}{\, \mathrm{d}t}
\newcommand{\dxt}{\,\mathrm{d}x\, \mathrm{d}t}
\newcommand{\dif}{\mathrm{d}}
\newcommand{\mf}{\mathfrak{F}}
\newcommand{\prst}{\mathbb{P}}
\newcommand{\mn}{\mathbb{N}}
\newcommand{\mt}{\mathcal{O}}
\newcommand{\tor}{\mathcal{O}}
\DeclareMathOperator{\diver}{div}
\begin{document}

\title[2D Navier--Stokes with additive noise]
{Numerical analysis of 2D Navier--Stokes equations with additive stochastic forcing}

\author{Dominic Breit}
\address{Department of Mathematics, Heriot-Watt University, Riccarton Edinburgh EH14 4AS, UK}
\email{d.breit@hw.ac.uk}

\author{Andreas Prohl}
\address{Mathematical Institute, University of T\"ubingen, Auf der Morgenstelle 10,
72076 T\"ubingen,
Germany }
\email{prohl@na.uni-tuebingen.de}

%
%

\begin{abstract}
We propose and study a temporal, and spatio-temporal discretisation  of the 2D stochastic Navier--Stokes equations in bounded domains supplemented with no-slip boundary conditions. Considering additive noise, we base its construction on the related nonlinear random PDE,
which is solved by a transform of the solution of the stochastic Navier--Stokes equations. We show {\em strong} rate (up to) $1$ in probability for a corresponding discretisation in space and time (and space-time). Convergence of order (up to) 1 in time was previously only known for linear SPDEs.
%
%

\end{abstract}

\keywords{Stochastic Navier--Stokes equations \and error in probability \and space-time discretisation  \and convergence rates}
\subjclass[2010]{65M15, 65C30, 60H15, 60H35}

\date{\today}

\maketitle

%
%
%
%
%
%
%
%
%
%

\section{Introduction}

We are concerned with the numerical approximation of the 2D stochastic Navier--Stokes equations in a bounded smooth domain $\mt\subset\R^2$ with no-slip boundary conditions.
They describe the flow of a homogeneous incompressible fluid in terms of the velocity field $\bfu$ and pressure function $p$ --- which are both defined on a filtered probability space $(\Omega,\mathfrak F,(\mathfrak F_t),\p)$, and read as
\begin{align}\label{eq:SNS}
\left\{\begin{array}{rc}
\dd\bfu=\mu\Delta\bfu\dt-(\nabla\bfu)\bfu\dt-\nabla p\dt+\Phi\,\dd W
& \mbox{in $\mathcal Q_T$,}\\
\Div \bfu=0\qquad\qquad\qquad\qquad\qquad\,\,\,\,& \mbox{in $\mathcal Q_T$,}\\
\bfu(0)=\bfu_0\,\qquad\qquad\qquad\qquad\qquad&\mbox{ \,in $\mt$,}\end{array}\right.
\end{align}
$\p$-a.s. in $\mathcal Q_T:=(0,T)\times\mt$, with terminal time $T>0$, the viscosity $\mu>0$, and $\bfu_0$ a given initial datum. The momentum equation is driven by a cylindrical Wiener process $W$ and the diffusion coefficient $\Phi$ takes values in the space of Hilbert-Schmidt operators; see Section \ref{sec:prob} for details.

The existence, regularity, and long-time behaviour of (probabilistically) strong solutions to \eqref{eq:SNS} have been studied extensively over the last three decades, and we refer to \cite{KukShi} for a complete picture. In most of the available numerical works for \eqref{eq:SNS} which address {\em strong} convergence analysis (we refer to {\em e.g.}, \cite{CP,BrDo,BeMi1,BeMi2}) periodic boundary conditions are assumed. This enables $\int (\nabla {\bf u}) {\bf u} \cdot \Delta {\bf u} \, {\rm d}x= 0$ as a key property to validate higher moment bounds for the solution 
 of (\ref{eq:SNS}) in strong spatial norms, for this and related regularity properties we refer to~\cite{CC,Ca,KukShi}. The convergence analysis for related discretisation schemes then has to cope with the interplay of the quadratic nonlinearity with the (possibly multiplicative) noise term in \eqref{eq:SNS}: without the noise term, a standard Gronwall-argument suffices to validate optimal convergence rates for a (finite element) based space-time discretisation of (\ref{eq:SNS}), see {\em e.g.} \cite{HeyRa1}. This argument
may not be applied directly any more in the presence of noise, where only {\em moments} of solutions of (\ref{eq:SNS}) are available (and evenly so for a stable space-time discretisation), rather than {\em uniform} bounds with respect to $\omega \in \Omega$:
 in \cite{CP}, the convergence analysis therefore separately estimates error amplification through  the nonlinearity in (\ref{tdiscr-add}) on ``large'' nested local sets $( \Omega_{t_{m-1}})_m \subset \Omega$, and proves
``smallness'' of errors with the help of discrete stability properties on the complementary ``small'' sets $(\Omega \setminus \Omega_{t_{m-1}})_m$. This approach has been further refined in the works \cite{BrDo,BeMi1,BeMi1a,BeMi2}. 

The works \cite{CP,BrDo,BeMi1,BeMi1a,BeMi2} mainly address \eqref{eq:SNS} with (Lipschitz) multiplicative noise, and use the concept of {\em (strong) orders in probability} for the error analysis of related space-time discretisations; this concept
is due to the nonlinear character of the problem, the effect of which is controlled
along (families of) local subsets of $\Omega$ as mentioned above. We remark that it  is weaker compared to {\em mean-square convergence}, which is usually employed to bound discretisation errors for linear stochastic PDEs. 
A parallel numerical program for (\ref{eq:SNS}) when the noise is  {\em additive} started in \cite{BeMi1,BeMi1a,BeMi2}: these works address the question if improved convergence results may be obtained for the same schemes, by exploiting additionally available
improved stability properties for this data setting (``exponential moment bounds''). It turns out that this is the case, provided that the strength of noise is small
with respect to the viscosity $\mu$: the first result is \cite[Thm.~3.4]{BeMi1a}, which 
proves a {\em mean-square convergence} rate, which crucially depends on the ratio of $\mu$, and the strength of noise: for example, the latter is limited to be of order ${O}(\mu)$ for $\mu \ll 1$; another result in this direction is
asymptotic mean-square convergence order (up to $\frac{1}{2}$) in \cite[Thm. 3.3]{BeMi2}, provided that the strength of noise is  of order ${O}(\mu)$.

In this work, we provide a new temporal (see (\ref{tdiscr-add})) and spatio-temporal (see (\ref{tdiscr-add-fem1})--(\ref{tdiscr-add-fem2})) discretisation for (\ref{eq:SNS}) with additive noise, which is based on the reformulation (\ref{eq:SNSy}) as {\em random PDE} for the transform ${\bf y} = {\bf u} - \Phi W$. The motivation is to therefore settle improved convergence behaviour of approximates for ${\bf y}$, which has improved time regularity properties: we will see in Lemma \ref{lem:regadditive} that  ${\bf y}$ is differentiable in time, while ${\bf u}$ is only H\"older continuous, with exponent $\alpha < \frac{1}{2}$. In fact, we show strong rate (up to) $1$ in probability for (\ref{tdiscr-add}) --- which doubles the order 
in \cite{BrDo} for the space-time discretisation in \cite{CP} for (\ref{eq:SNS})   with multiplicative noise; see Theorem \ref{thm:maintdiscr} for semi-discretisation (\ref{tdiscr-add}), and the generalisation of this result in Section \ref{sec:fe} for the spatio-temporal discretisation (\ref{tdiscr-add-fem1})--(\ref{tdiscr-add-fem2}).
To our knowledge, an (up to) first-order temporal discretisation
for a nonlinear SPDE was not available in the literature before, and we expect that the conceptual steps
for its construction and convergence analysis  given in the sections below are applicable to other
nonlinear SPDEs with additive noise as well.
The error analysis below is performed in the presence of homogeneous Dirichlet boundary data in (\ref{eq:SNS}), and driving trace-class, solenoidal noise which vanishes on the boundary. We emphasise that our approach does not require any smallness condition on the noise. It is also worth to point out that our convergence results improve to a {\em mean-square convergence} rate $1$ for the stochastic Stokes equations with additive noise, see Remark \ref{rem:A}.

We recall that $\int (\nabla {\bf u}) {\bf u} \cdot \Delta {\bf u} \, {\rm d}x= 0$ does not hold any more for this boundary value problem for (\ref{eq:SNS}), and also higher moment bounds
for ${\bf u}$ in strong norms seem beyond reach, which then affects the efficiency 
of the strategy of proof in \cite{CP,BrDo,BeMi1,BeMi1a,BeMi2} which 
separately studies errors on ``large'' nested local sets $( \Omega_{t_{m-1}})_m \subset \Omega$, and small ones. Therefore, a different strategy to verify rates with the help of (discrete) stopping times and 
truncated nonlinearities was developed in \cite{BrPr} for (a spatial discretisation of) (\ref{tdiscr}) and iterates $({\bf u}_m)_m$, which was motivated by the analytical works \cite{Mi,GlZi}. We use related concepts in Section \ref{sec:add}  to
establish stability and convergence rates for (solenoidal) iterates $({\bf y}_m)_m$ with zero boundary data from the temporal semi-discretisation (\ref{tdiscr-add}), which satisfy  ${\bf y}_m = 
{\bf u}_m - \Phi W(t_m)$. It is worth to point out that we work here under more restrictive assumptions on the noise compared to our previous paper \cite{BrPr}; this allows us to prove higher order estimates for the temporal
approximations in Lemma \ref{lemma:3.1}, which turns out to be useful for the 
error analysis of $({\bf y}_m)_m$ from (\ref{tdiscr-add}). For this it is crucial to assume that the diffusion coefficient vanishes on $\partial\mathcal O$. Similarly, the analysis in Section \ref{sec:3.1} heavily relies on 
the solenoidal character of the noise (conceptionally, this assumption can be dropped in the case of periodic boundary conditions at least as far as the purely temporal discretisation is concerned).

Section 
\ref{sec:fe} then proposes the LBB-stable mixed finite element scheme
(\ref{tdiscr-add-fem1})--(\ref{tdiscr-add-fem2}) for iterates $({\bf Y}_m)_m$, for which we quantify the error $\max_{m}\Vert {\bf y}_m - {\bf Y}_m\Vert_{L^2_x}$, in particular: its proof rests on essentially considering a ``finite element version'' $({\bf U}_m)_m$ in (\ref{fem-9}) of $({\bf u}_m)_m$, which solves (\ref{txdiscrpia}) as spatial discretisation of (\ref{tdiscr}). To proceed like this here, {\em i.e.,} to focus on the sequence $({\bf U}_m)_m$ rather than $({\bf Y}_m)_m$, is justified for the {\em spatial} error analysis part, where the limited temporal regularity of the driving Wiener process does not spoil the order. The main result is then given in Theorem \ref{thm:maintdiscra}, which rests on discrete stability results for (\ref{tdiscr}) that are collected in Section \ref{sec:disest}.

\section{Mathematical framework}
\label{sec:framework}

\subsection{Probability setup}\label{sec:prob}

Let $(\Omega,\F,(\F_t)_{t\geq0},\prst)$ be a stochastic basis with a complete, right-continuous filtration. The process $W$ is a cylindrical Wiener process, that is, $W(t)=\sum_{k\geq1}\beta_j(t) e_j$ with $(\beta_j)_{j\geq1}$ being mutually independent real-valued standard Wiener processes relative to $(\F_t)_{t\geq0}$, and $(e_j)_{j\geq1}$ a complete orthonormal system in a separable Hilbert space $\mathfrak{U}$.
We assume that the diffusion coefficient $\varPhi$ belongs to the set of Hilbert-Schmidt operators $L_2(\mathfrak U;\mathbb H)$, where
$\mathbb H$ can take the role of various Hilbert spaces. Of particular importance are spaces of solenoidal vector fields, such as $L^2_{\Div}(\mt;\R^2)$ and $W^{1,2}_{0,\Div}(\mt;\R^2)$. They are defined as the closure of the solenoidal $C_{c}^\infty(\mt;\R^2)$-functions\footnote{We denote this space by $C_{c,\Div}^\infty(\mt;\R^2)$ in the following.} with respect to the $L^2(\mt;\R^2)$- or $W^{1,2}_0(\mt;\R^2)$-norm, respectively. 
Given $\Phi\in L^2(\mathfrak U;L^2_{\Div}(\mt;\R^2))$ the stochastic integral $$t\mapsto\int_0^t\varPhi\,\dif W$$
is a well defined process taking values in $L^2_{\Div}(\mt;\R^2)$ (see \cite{PrZa} for a detailed construction). Moreover, we can multiply by test functions to obtain
 \begin{align*}
\bigg\langle\int_0^t \varPhi\,\dd W,\bfphi\bigg\rangle_{L^2_x}=\sum_{j\geq 1} \int_0^t\langle \varPhi e_j,\bfphi\rangle_{L^2_x}\,\dd\beta_j \quad \forall\, \bfphi\in L^2(\mt;\R^2).
\end{align*}
Similarly, we can define stochastic integrals with values in $W^{1,2}_{0,\Div}(\mt;\R^2)$ and $ W^{2,2}(\mt;\R^2)$, respectively, if $\Phi$ belongs to the corresponding class of Hilbert-Schmidt operators. 


\subsection{The concept of solutions}
\label{subsec:solution}
In dimension two, pathwise uniqueness for analytically weak solutions of \eqref{eq:SNS} is known; we refer the reader for instance to Capi\'nski--Cutland \cite{CC}, Capi\'nski \cite{Ca}. Consequently, we may work with the definition of a weak pathwise solution.

\begin{definition}\label{def:inc2d}
Suppose that $\Phi\in L_2(\mathfrak U;L^2_{\Div}(\mt;\R^2))$.
Let $(\Omega,\mf,(\mf_t)_{t\geq0},\prst)$ be a given stochastic basis with a complete right-con\-ti\-nuous filtration and an $(\mf_t)$-cylindrical Wiener process $W$. Let $\bfu_0$ be an $\mf_0$-measurable random variable with values in $L^2_{\Div}(\mt;\R^2)$. Then $\bfu$ is called a \emph{weak pathwise solution} \index{incompressible Navier--Stokes system!weak pathwise solution} to \eqref{eq:SNS} with the initial condition $\bfu_0$ provided
\begin{enumerate}
\item the velocity field $\bfu$ is $(\mf_t)$-adapted and
$$\bfu \in C([0,T];L^2_{\diver}(\tor;\R^2))\cap L^2(0,T; W^{1,2}_{0,\diver}(\tor;\R^2))\quad\text{$\p$-a.s.},$$
\item the momentum equation
\begin{align*}
\int_{\mt}\bfu(t)&\cdot\bfvarphi\dx-\int_{\mt}\bfu_0\cdot\bfvarphi\dx
\\&=\int_0^t\int_{\mt}\bfu\otimes\bfu:\nabla\bfvarphi\dx\,\dif t-\mu\int_0^t\int_{\mt}\nabla\bfu:\nabla\bfvarphi\dx\,\dif s+\int_{\mt}\int_0^t\Phi\cdot\bfvarphi\,\dif W\dx
\end{align*}
holds $\p$-a.s. for all $\bfvarphi\in W^{1,2}_{0,\diver}(\mt;\R^2)$ and all $t\in[0,T]$.
\end{enumerate}
\end{definition}
\begin{theorem}\label{thm:inc2d}
Suppose that $\Phi\in L_2(\mathfrak U;L^2_{\Div}(\mt;\R^2))$. Let $(\Omega,\mf,(\mf_t)_{t\geq0},\prst)$ be a given stochastic basis with a complete right-continuous filtration and an $(\mf_t)$-cylindrical Wiener process $W$. Let $\bfu_0$ be an $\mf_0$-measurable random variable such that $\bfu_0\in L^r(\Omega;L^2_{\mathrm{div}}(\mt;\R^2))$ for some $r>2$. Then there exists a unique weak pathwise solution to \eqref{eq:SNS} in the sense of Definition \ref{def:inc2d} with the initial condition $\bfu_0$.
\end{theorem}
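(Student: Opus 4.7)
The strategy I would follow is the classical Galerkin-plus-pathwise-uniqueness scheme, combined with the Gyöngy--Krylov principle to upgrade a martingale solution on a stochastic basis to a weak pathwise solution on the given one. Specifically, I would fix an orthonormal basis $(\bfw_k)$ of $L^2_{\Div}(\mt)$ consisting of eigenfunctions of the Stokes operator (so $(\bfw_k) \subset W^{1,2}_{0,\Div}\cap W^{2,2}(\mt)$) and consider the Galerkin projection of \eqref{eq:SNS} onto $V_N=\Span(\bfw_1,\dots,\bfw_N)$. This yields a system of finite-dimensional SDEs with locally Lipschitz drift and globally Lipschitz (indeed constant) diffusion, for which a unique strong solution $\bfu_N$ exists up to an a priori explosion time.

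The first main step is to derive a priori estimates. I would apply It\^o's formula to $\|\bfu_N(t)\|_{L^2_x}^2$; the trilinear term vanishes because $\int_\mt(\nabla\bfu_N)\bfu_N\cdot\bfu_N\dx=0$ for divergence-free fields with zero trace, so that after using the Burkholder--Davis--Gundy inequality and $\|\Phi\|_{L_2(\mathfrak U;L^2_x)}<\infty$, one obtains
\begin{align*}
\E\Bigl[\sup_{t\in[0,T]}\|\bfu_N(t)\|_{L^2_x}^r+\Bigl(\int_0^T\|\nabla\bfu_N\|_{L^2_x}^2\dt\Bigr)^{r/2}\Bigr]\leq C\bigl(1+\E\|\bfu_0\|_{L^2_x}^r\bigr)
\end{align*}
with the same $r>2$ as in the assumption. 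This precludes explosion and gives uniform moment bounds. From the equation one also gets fractional-in-time H\"older bounds on $\bfu_N$ in a negative Sobolev space (treating drift and stochastic integral separately), which together with the energy bound yields tightness of the laws $\{\mathcal L(\bfu_N,W)\}$ in $\bigl(L^2(0,T;L^2_{\Div}(\mt))\cap C([0,T];(W^{1,2}_{0,\Div})^*)\bigr)\times C([0,T];\mathfrak U_0)$ via Aubin--Lions.

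Next I would invoke the Jakubowski--Skorokhod theorem to pass, on a new stochastic basis, to a.s. convergent versions $\tilde\bfu_N\to\tilde\bfu$, $\tilde W_N\to\tilde W$. The strong $L^2_t L^2_x$ convergence (with uniform $L^r(\Omega)$ moments, $r>2$) is enough to pass to the limit in the convective term by Vitali, while the stochastic integral is identified by the standard martingale representation argument. This produces a martingale weak solution with the regularity stated in (i) and satisfying the equation in (ii). To return to the original basis, I would prove pathwise uniqueness: given two solutions $\bfu_1,\bfu_2$ driven by the same $W$, the additive noise cancels and $\bfz=\bfu_1-\bfu_2$ satisfies, $\p$-a.s., a purely deterministic equation. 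Testing with $\bfz$ and estimating the trilinear term by the 2D Ladyzhenskaya inequality $\|\bfz\|_{L^4_x}^2\leq C\|\bfz\|_{L^2_x}\|\nabla\bfz\|_{L^2_x}$,
\begin{align*}
\Bigl|\int_\mt\bfz\otimes\bfu_2:\nabla\bfz\dx\Bigr|\leq \tfrac{\mu}{2}\|\nabla\bfz\|_{L^2_x}^2+C\|\bfu_2\|_{L^4_x}^4\|\bfz\|_{L^2_x}^2,
\end{align*}
and noting that $\int_0^T\|\bfu_i\|_{L^4_x}^4\dt<\infty$ $\p$-a.s.\ by Ladyzhenskaya and the energy bound, a pathwise Gronwall argument yields $\bfz\equiv 0$.

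The final step is the Gy\"ongy--Krylov lemma: existence of a martingale solution together with pathwise uniqueness implies existence of a weak pathwise solution on the original stochastic basis, with uniqueness then inherited directly from the pathwise-uniqueness argument. The main technical obstacle is the identification of the nonlinear limit in the convective term; this is where the moment exponent $r>2$ is needed (to obtain uniform integrability for Vitali), while the 2D Ladyzhenskaya inequality is the key structural ingredient that makes pathwise uniqueness --- and hence the whole Gy\"ongy--Krylov upgrade --- available.
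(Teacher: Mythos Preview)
The paper does not give its own proof of this theorem: it is stated as a known result, with the preceding text referring to Capi\'nski--Cutland \cite{CC} and Capi\'nski \cite{Ca} for pathwise uniqueness in two dimensions. Your outline --- Galerkin approximation with Stokes eigenfunctions, energy estimates via It\^o's formula and the Burkholder--Davis--Gundy inequality, tightness and the Jakubowski--Skorokhod representation, identification of the limit, and then pathwise uniqueness via the 2D Ladyzhenskaya inequality combined with the Gy\"ongy--Krylov lemma --- is precisely the standard route to such results and is correct. There is therefore nothing in the paper to compare against beyond the cited references, which follow essentially the same strategy you describe.
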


We give the definition of a strong pathwise solution to \eqref{eq:SNS} which exists up to a stopping time $\mathfrak t$. The velocity field belongs $\p$-a.s. to $C([0,\mathfrak t];W^{1,2}_{0,\diver}(\mt;\R^2))$.

\begin{definition}\label{def:strsol}
Let $(\Omega,\mf,(\mf_t)_{t\geq0},\prst)$ be a given stochastic basis with a complete right-continuous filtration and an $(\mf_t)$-cylindrical Wiener process $W$. Let $\bfu_0$ be an $\mf_0$-measurable random variable with values in $W^{1,2}_{0,\diver}(\mt;\R^2)$. The tuple $(\bfu,\mathfrak t)$ is called a \emph{local strong pathwise solution} \index{incompressible Navier--Stokes system!weak pathwise solution} to \eqref{eq:SNS} with the initial condition $\bfu_0$ provided
\begin{enumerate}
\item $\mathfrak t$ is a $\p$-a.s. strictly positive $(\mathfrak F_t)$-stopping time;
\item the velocity field $\bfu$ is $(\mf_t)$-adapted and
$$\bfu(\cdot\wedge \mathfrak t) \in C_{\mathrm loc}([0,\infty);W^{1,2}_{0,\diver}(\mt;\R^2))\cap L^2_{\mathrm loc}(0,\infty;W^{2,2}(\mt;\R^2)) \quad\text{$\p$-a.s.},$$
\item the momentum equation
\begin{align}\label{eq:mom}
\begin{aligned}
&\int_{\mt}\bfu(\tau\wedge \mathfrak t)\cdot\bfvarphi\dx-\int_{\mt}\bfu_0\cdot\bfvarphi\dx
\\&=-\int_0^{\tau\wedge\mathfrak t}\int_{\mt}(\nabla\bfu)\bfu\cdot\bfvarphi\dx\,\dif s+\mu\int_0^{\tau\wedge\mathfrak t}\int_{\mt}\Delta\bfu\cdot\bfvarphi\dx\,\dif s+\int_{\mt}\int_0^{\tau\wedge\mathfrak t}\Phi\cdot\bfvarphi\,\dif W\dx
\end{aligned}
\end{align}
holds $\p$-a.s. for all $\bfvarphi\in C^{\infty}_{c,\diver}(\mt;\R^2)$ and all $\tau\geq0$.
\end{enumerate}
\end{definition}
We finally define a maximal strong pathwise solution.
\begin{definition}[Maximal strong pathwise solution]\label{def:maxsol}
Fix a stochastic basis with a cylindrical Wiener process and an initial condition as in Definition \ref{def:strsol}. A triplet $$(\bfu,(\mathfrak{t}_R)_{R\in\N},\mathfrak{t})$$ is a maximal strong pathwise solution to system \eqref{eq:SNS} provided

\begin{enumerate}
\item $\mathfrak{t}$ is a $\p$-a.s. strictly positive $(\mathfrak{F}_t)$-stopping time;
\item $(\mathfrak{t}_R)_{R\in\mn}$ is an increasing sequence of $(\mathfrak{F}_t)$-stopping times such that
$\mathfrak{t}_R<\mathfrak{t}$ on the set $[\mathfrak{t}<\infty]$,
$\lim_{R\to\infty}\mathfrak{t}_R=\mathfrak t$ $\p$-a.s., and
\begin{equation}\label{eq:blowup}
\mathfrak t_R:=\inf \big\{t\in[0,\infty):\,\,\|\bfu(t)\|_{W^{1,2}_x}\geq R\big\}\quad \text{on}\quad [\mathfrak{t}<\infty] ,
\end{equation}
with the convention that $\mathfrak{t}_R=\infty$ if the set above is empty;
\item each triplet $(\bfu,\mathfrak{t}_R)$, $R\in\mn$,  is a local strong pathwise solution in the sense  of Definition \ref{def:strsol}.
\end{enumerate}
\end{definition}
We talk about a global solution if we have (in the framework of Definition \ref{def:maxsol}) $\mathfrak t=\infty$ $\p$-a.s.
The following result is shown in \cite{Mi} (see also \cite{GlZi} for a similar statement).
\begin{theorem}\label{thm:inc2dmax}
Suppose that $\Phi\in L_2(\mathfrak U;W^{1,2}_{0,\Div}(\mt;\R^2))$ and
 $\bfu_0\in L^2(\Omega,W^{1,2}_{0,\diver}(\mt;\R^2))$. Then there is a
unique global maximal strong pathwise solution to \eqref{eq:SNS} in the sense  of Definition \ref{def:maxsol}.
\end{theorem}

\subsection{Regularity of solutions}
The following lemma is a special case of \cite[Lemma 3.1]{BrPr}, where also multiplicative noise is considered.
\begin{lemma}\label{lem:reg}
 \begin{enumerate}
\item[(a)] Assume that $\bfu_0\in L^r(\Omega,L^{2}_{\Div}(\mt;\R^2))$ for some $r\geq2$ and that $\Phi\in L_2(\mathfrak U;L^2_{\Div}(\mt;\R^2))$. Then we have
\begin{align}\label{eq:W12}
\E\bigg[\sup_{0\leq t\leq T}\int_{\mt}|\bfu(t)|^2\dx+\int_0^{T}\int_{\mt}|\nabla\bfu|^2\dxt\bigg]^{\frac{r}{2}}\leq\,c\,\E\Big[1+\|\bfu_0\|_{L^2_x}^2\Big]^{\frac{r}{2}},
\end{align}
where $\bfu$ is the weak pathwise solution to \eqref{eq:SNS}, cf. Definition \ref{def:inc2d}.
\item[(b)] Assume that $\bfu_0\in L^r(\Omega,W^{1,2}_{0,\Div}(\mt;\R^2))$ for some $r\geq2$ and $\Phi\in L_2(\mathfrak U;W^{1,2}_{0,\Div}(\mt;\R^2))$. Then we have
\begin{align}\label{eq:W22}\begin{aligned}
\E\bigg[\sup_{0\leq t\leq T}\int_{\mt}|\nabla\bfu(t\wedge \mathfrak t_R)|^2\dx&+\int_0^{T\wedge \mathfrak t_R}\int_{\mt}|\nabla^2\bfu|^2\dxt\bigg]^{\frac{r}{2}}\\&\leq\,cR^r\,\E\Big[1+\|\bfu_0\|_{W^{1,2}_x}^2\Big]^{\frac{r}{2}},
\end{aligned}
\end{align}
where $(\bfu,(\mathfrak t_R)_{R\in\N},\mathfrak t)$ is the maximal strong pathwise solution to \eqref{eq:SNS}, cf. Definition \ref{def:maxsol}.
\item[(c)] Assume that $\bfu_0\in L^r(\Omega,W^{2,2}(\mt;\R^2))\cap L^{5r}(\Omega,W^{1,2}_{0,\Div}(\mt;\R^2))$ for some $r\geq2$ and $\Phi\in L_2(\mathfrak U;W^{1,2}_{0,\Div}\cap W^{2,2}(\mt;\R^2))$. Then we have
\begin{align}\label{eq:W32}
\begin{aligned}
\E\bigg[\sup_{0\leq t\leq T}\int_{\mt}|\nabla^2\bfu(t\wedge \mathfrak t_R)|^2\dx&+\int_0^{T\wedge \mathfrak t_R}\int_{\mt}|\nabla^3\bfu|^2\dxt\bigg]^{\frac{r}{2}}\\&\leq\,cR^r\,\E\Big[1+\|\bfu_0\|_{W^{2,2}_x}^2\Big]^{\frac{r}{2}},
\end{aligned}
\end{align}
where $(\bfu,(\mathfrak t_R)_{R\in\N},\mathfrak t)$ is the maximal strong pathwise solution to \eqref{eq:SNS}, cf. Definition \ref{def:maxsol}.
\end{enumerate}
Here $c=c(r,T,\Phi)$ is independent of $R$.
\end{lemma}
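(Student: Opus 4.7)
The plan is to establish the three bounds via the same strategy: apply It\^o's formula to $\|\nabla^k\bfu(t\wedge\mathfrak t_R)\|_{L^2_x}^2$ (with $k=0,1,2$ for parts (a), (b), (c)), raise to the $r/2$-power, control the resulting stochastic integral by Burkholder--Davis--Gundy, and close with Gronwall's lemma. The stopping time $\mathfrak t_R$ is needed precisely to tame the quadratic nonlinearity at the $W^{1,2}$- and $W^{2,2}$-levels, where it no longer behaves conservatively. Part (a) requires no stopping time: It\^o applied to $\|\bfu(t)\|_{L^2_x}^2$ yields
\[
\|\bfu(t)\|_{L^2_x}^2 + 2\mu\int_0^t\|\nabla\bfu\|_{L^2_x}^2\dd s = \|\bfu_0\|_{L^2_x}^2 + t\,\|\Phi\|_{L_2(\mathfrak U;L^2_x)}^2 + 2\int_0^t\langle\bfu,\Phi\dd W\rangle_{L^2_x},
\]
where the convective term vanishes since $\diver\bfu=0$ and $\bfu|_{\partial\mathcal O}=0$. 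Raising to the $r/2$-power, applying BDG, and Gronwall produces \eqref{eq:W12}.

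For part (b), test the momentum equation stopped at $\mathfrak t_R$ with $-\mathcal P\Delta\bfu$ ($\mathcal P$ denoting the Leray projection), or equivalently apply It\^o to $\|\nabla\bfu(t\wedge\mathfrak t_R)\|_{L^2_x}^2$. The nonlinear term $\int_{\mt}(\nabla\bfu)\bfu\cdot\mathcal P\Delta\bfu\dx$ is not sign-definite, and I would estimate it via the two-dimensional Gagliardo--Nirenberg inequalities $\|\bfu\|_{L^4_x}^2\lesssim\|\bfu\|_{L^2_x}\|\nabla\bfu\|_{L^2_x}$ and $\|\nabla\bfu\|_{L^4_x}^2\lesssim\|\nabla\bfu\|_{L^2_x}\|\nabla^2\bfu\|_{L^2_x}$, which give
\[
\left|\int_{\mt}(\nabla\bfu)\bfu\cdot\mathcal P\Delta\bfu\dx\right| \leq \ep\|\nabla^2\bfu\|_{L^2_x}^2 + C_\ep\|\bfu\|_{L^2_x}^2\|\nabla\bfu\|_{L^2_x}^4.
\]
On $[0,\mathfrak t_R]$ the truncation $\|\bfu\|_{W^{1,2}_x}\leq R$ reduces the last factor to $R^2\|\nabla\bfu\|_{L^2_x}^2$; the viscous dissipation absorbs $\ep\|\nabla^2\bfu\|^2$, and the resulting Gronwall coefficient depends polynomially on $R$. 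Raising to the $r/2$-power, applying BDG to the martingale (which now involves $\|\Phi\|_{L_2(\mathfrak U;W^{1,2}_x)}$), and combining with (a) yield \eqref{eq:W22}.

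Part (c) proceeds along the same lines with It\^o applied to $\|\Delta\bfu(t\wedge\mathfrak t_R)\|_{L^2_x}^2$, but the nonlinear contribution $\int_{\mt}\Delta\big((\nabla\bfu)\bfu\big)\cdot\Delta\bfu\dx$ splits into summands of the form $\bfu\cdot\nabla^3\bfu\cdot\nabla^2\bfu$, $\nabla\bfu\cdot\nabla^2\bfu\cdot\nabla^2\bfu$ and $\nabla^2\bfu\cdot\nabla\bfu\cdot\nabla^2\bfu$. Each is handled by H\"older, the 2D Sobolev embedding $W^{1,2}\hookrightarrow L^p$ for every finite $p$, and the interpolation $\|\nabla^2\bfu\|_{L^4_x}^2\lesssim\|\nabla^2\bfu\|_{L^2_x}\|\nabla^3\bfu\|_{L^2_x}$. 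The stopping time again truncates $\|\bfu\|_{W^{1,2}_x}$ by $R$, so the dissipation $\mu\|\nabla^3\bfu\|^2$ absorbs the top-order piece; the remaining Gronwall integrand is a polynomial in $\|\nabla\bfu\|_{L^2_x}$ of high enough degree that, after combining with (b), a super-linear moment bound on $\|\bfu_0\|_{W^{1,2}_x}$ is required to close the exponential. This is the origin of the hypothesis $\bfu_0\in L^{5r}(\Omega;W^{1,2}_{0,\Div}(\mt))$. The main obstacle is exactly this nonlinear bookkeeping in (c): the stopping time controls only first-order norms of $\bfu$, so closure must pass through 2D interpolation whose exponents force the superlinear moment hypothesis on the initial data. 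A secondary technical point is the rigorous application of It\^o's formula at the $W^{2,2}$-level, which is standard via Galerkin approximation in the Stokes eigenbasis followed by passage to the limit.
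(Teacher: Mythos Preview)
The paper does not prove this lemma at all: it simply records that the result is a special case of \cite[Lemma~3.1]{BrPr}, where the more general multiplicative-noise setting is treated. So there is no ``paper's proof'' to compare against; your sketch stands on its own.

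Your strategy---It\^o's formula applied to $\|\nabla^k\bfu\|_{L^2_x}^2$, absorption of the top-order term by the dissipation, BDG for the stochastic integral, and Galerkin justification---is the standard route and is essentially what underlies the cited result. Part~(a) is correct as written, and your explanation of why the $L^{5r}$ moment hypothesis on $\bfu_0$ is forced in~(c) is on target.

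One point deserves care. In~(b) (and analogously in~(c)) you write that after truncation the ``Gronwall coefficient depends polynomially on $R$'' and then conclude \eqref{eq:W22}. But Gronwall with a coefficient of size $R^2$ (or $R^4$) produces a factor $e^{cR^\alpha}$, not the polynomial $cR^r$ stated in the lemma. To obtain the polynomial bound you should \emph{avoid} Gronwall altogether: after absorbing $\varepsilon\|\nabla^2\bfu\|_{L^2_x}^2$, integrate the differential identity in time, use the truncation $\|\bfu\|_{W^{1,2}_x}\le R$ on $[0,\mathfrak t_R]$ once to bound the time-integrated nonlinear term by $cR^2\int_0^T\|\bfu\|_{L^2_x}^2\|\nabla\bfu\|_{L^2_x}^2\,\dif s$, and then estimate this integral in $L^{r/2}(\Omega)$ via part~(a). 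This yields the prefactor $R^r$ after raising to the $r/2$-power, with the remaining randomness absorbed by the energy estimate. The distinction matters because the paper later relies on the polynomial growth in $R$ (e.g.\ in Corollary~\ref{cor:add}).
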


\subsection{Estimates for the time-discrete solution}
\label{sec:disest}
We now consider a temporal approximation of  \eqref{eq:SNS} on an equidistant partition of $[0,T]$ with mesh size $\tau=T/M$, and set $t_m=m\tau$. 
Let $\bfu_{0}$ be an $\mathfrak F_0$-measurable random variable with values in $L^{2}_{\Div}(\mt;\R^2)$. For $1 \leq m \leq M$, we aim at constructing iteratively a sequence of $\mathfrak F_{t_m}$-measurable random variables $\bfu_{m}$ with values in $W^{1,2}_{0,\Div}(\mt;\R^2)$ such that
for every $\bfphi\in W^{1,2}_{0,\Div}(\mt;\R^2)$ it holds true $\p$-a.s.
\begin{align}\label{tdiscr}
\begin{aligned}
\int_{\mt}&\bfu_{m}\cdot\bfvarphi \dx -\tau\int_{\mt}\bfu_{m}\otimes\bfu_{m-1} :\nabla\bfphi\dx\\
&\qquad=-\mu\tau\int_{\mt}\nabla\bfu_{m}:\nabla\bfphi\dx+\int_{\mt}\bfu_{m-1}\cdot\bfvarphi \dx+\int_{\mt}\Phi\,\Delta_mW\cdot \bfvarphi\dx,
\end{aligned}
\end{align}
where $\Delta_m W=W(t_m)-W(t_{m-1})$. 
For given $\bfu_{m-1}$ and $\Delta_m W$, to verify the existence o af unique $\bfu_m$  solving \eqref{tdiscr} is straightforward since it is linear in $\bfu_m$.
Hence we can rewrite \eqref{tdiscr} as
\begin{align}\label{tdiscr'}
\bfu_{m} +\tau\mathcal P(\nabla\bfu_{m})\bfu_{m-1}=\mu\tau\mathcal A\bfu_m+\bfu_{m-1}+\Phi\,\Delta_mW \qquad (1 \leq m \leq M)\, ,
\end{align}
an identity which holds $\mathbb P$-a.s.~in $L^2(\mt;\R^2)$.
Here $\mathcal P:L^2(\mt;\R^2)\rightarrow L^2_{\Div}(\mt;\R^2)$ denotes the Helmholtz projection and $\mathcal A=\mathcal P\Delta$ is the Stokes operator.

%
%
\begin{lemma}\label{lemma:3.1a} 
Assume that $\bfu_0\in L^{2q}(\Omega,L^{2}_{\Div}(\mt;\R^2))$ for some $q\in\N$. Suppose that $\Phi\in L_2(\mathfrak U;L^2_{\Div}(\mt;\R^2))$. Then the iterates $(\bfu_m)_{m=1}^M$ given by \eqref{tdiscr}
satisfy the following estimate uniformly in $M$:
\begin{align}
\label{lem:3.1a}\E\bigg[\max_{1\leq m\leq M}\|\bfu_m\|^{2q}_{L^{2}_x}+\tau\sum_{m=1}^M\|\bfu_m\|_{L^{2}_x}^{2q-2}\|\nabla\bfu_m\|^2_{L^2_x}\bigg]&\leq\,c,
\end{align}
where $c=c(q,T,\Phi,\bfu_0)>0$. 
\end{lemma}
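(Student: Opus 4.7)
The plan is to test \eqref{tdiscr} with $\bfphi = \bfu_m$ and iterate. Since $\bfu_{m-1}$ is solenoidal and $\bfu_m\in W^{1,2}_{0,\Div}(\mt)$ vanishes on $\partial\mt$, an integration by parts annihilates the convective contribution,
\[
\int_\mt \bfu_m\otimes\bfu_{m-1}:\nabla\bfu_m\dx \;=\; \tfrac12\int_\mt \bfu_{m-1}\cdot\nabla|\bfu_m|^2\dx \;=\; 0,
\]
the boundary term vanishing because $|\bfu_m|^2=0$ on $\partial\mt$. Coupled with the identity $a\cdot(a-b) = \tfrac12(|a|^2-|b|^2+|a-b|^2)$ this yields the basic discrete energy identity
\[
\|\bfu_m\|_{L^2_x}^2 - \|\bfu_{m-1}\|_{L^2_x}^2 + \|\bfu_m-\bfu_{m-1}\|_{L^2_x}^2 + 2\mu\tau\|\nabla\bfu_m\|_{L^2_x}^2 \;=\; 2\int_\mt\Phi\,\Delta_m W\cdot\bfu_m\dx .
\]

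To extract the first term in \eqref{lem:3.1a} I split the stochastic integrand as $\Phi\Delta_m W\cdot\bfu_{m-1} + \Phi\Delta_m W\cdot(\bfu_m-\bfu_{m-1})$ and use Young on the second piece to absorb $\|\bfu_m-\bfu_{m-1}\|^2$ into the left. Summing $m=1,\ldots,n$ gives
\[
\|\bfu_n\|_{L^2_x}^2 + \mu\tau\sum_{m=1}^n\|\nabla\bfu_m\|_{L^2_x}^2 \;\leq\; \|\bfu_0\|_{L^2_x}^2 + 2|M_n| + 2\sum_{m=1}^n\|\Phi\,\Delta_m W\|_{L^2_x}^2,
\]
where $M_n:=\sum_{m=1}^n\int_\mt\Phi\Delta_m W\cdot\bfu_{m-1}\dx$ is a discrete $(\mathcal F_{t_m})$-martingale with predictable quadratic variation bounded by $\tau\|\Phi\|_{L_2}^2\sum_m\|\bfu_{m-1}\|_{L^2_x}^2$. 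Raising to the $q$-th power, passing to $\max_n$ and to expectation, the discrete Burkholder--Davis--Gundy inequality combined with Cauchy--Schwarz yields
\[
\E\big[\max_n|M_n|^q\big] \;\leq\; c\,\E\Big[\big(\tau\|\Phi\|_{L_2}^2\textstyle\sum_m\|\bfu_{m-1}\|_{L^2_x}^2\big)^{q/2}\Big] \;\leq\; \epsilon\,\E\big[\max_m\|\bfu_m\|_{L^2_x}^{2q}\big] + c_\epsilon,
\]
while independence of the Wiener increments furnishes $\E\big[\big(\sum_m\|\Phi\Delta_m W\|_{L^2_x}^2\big)^q\big] \leq c(T,\|\Phi\|_{L_2})$. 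Choosing $\epsilon$ small and absorbing settles the unweighted moment. The a priori finiteness of $\E[\max_m\|\bfu_m\|^{2q}]$ required to justify this absorption is provided by a routine induction on $m$ based on \eqref{tdiscr'}, since each step is a linear elliptic problem driven by Gaussian data with all moments.

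The weighted half of \eqref{lem:3.1a} is immediate from the summed energy inequality when $q=1$; for $q\geq 2$ I apply H\"older with exponents $\tfrac{q}{q-1}$ and $q$,
\[
\E\Big[\tau\sum_{m=1}^M\|\bfu_m\|_{L^2_x}^{2q-2}\|\nabla\bfu_m\|_{L^2_x}^2\Big] \;\leq\; \big(\E[\max_m\|\bfu_m\|_{L^2_x}^{2q}]\big)^{\frac{q-1}{q}}\Big(\E\big[(\tau\textstyle\sum_m\|\nabla\bfu_m\|_{L^2_x}^2)^q\big]\Big)^{1/q},
\]
and control the second factor by raising the summed energy inequality with $n=M$ to the $q$-th power and reusing the BDG and Gaussian-moment bounds above (no circularity, as $\E[\max_m\|\bfu_m\|^{2q}]\leq c$ is already in hand). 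I expect the only delicate point to be the Young absorption step; the structural heart of the argument is the convective cancellation, which crucially uses both $\Div\bfu_{m-1}=0$ and the no-slip boundary condition on $\bfu_m$.
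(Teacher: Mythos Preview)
Your argument is correct and follows the same route as the paper, which simply cites \cite[Lemma~3.1]{BCP}: test \eqref{tdiscr} with $\bfu_m$, use that the convective term cancels (which, as you note, relies on $\Div\bfu_{m-1}=0$ together with the no-slip condition $\bfu_m|_{\partial\mt}=0$), split the stochastic increment into a predictable martingale part plus an increment part controlled by Young, and close with the discrete Burkholder--Davis--Gundy inequality.

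The only deviation from the cited reference is in how you obtain the weighted dissipation term: \cite{BCP} multiplies the one-step energy identity by $\|\bfu_m\|_{L^2_x}^{2q-2}$ \emph{before} summing and iterates in $q$, whereas you first establish $\E[\max_m\|\bfu_m\|_{L^2_x}^{2q}]+\E\big[(\tau\sum_m\|\nabla\bfu_m\|_{L^2_x}^2)^q\big]\leq c$ and then combine the two via H\"older. Your shortcut is slightly more economical here because the noise is additive (so no $\|\bfu_m\|$-dependence enters the increment bounds), but either path works and the structural core---convective cancellation and BDG---is identical.
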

\begin{proof}
The proof of \eqref{lem:3.1a} is identical to \cite[Lemma 3.1]{BCP}. The latter one is for perodic boundary conditions, but in the case of no-slip boundary conditions the same arguments apply.
Also note that we consider a semi-implicit algorithm, which again does not impact the proof since the convective term still cancels when testing with $\bfu_m$.
 \end{proof}
 For $R_1>0$, we define the (discrete) $(\mathfrak{F}_{t_{m}})$-stopping time
\begin{eqnarray*} 
{\mathfrak s}_{R_1}^{\tt d} &:=& \min_{0 \leq m \leq M} \biggl\{ t_m:\ \sum_{n=0}^m \tau \Vert {\bf u}_{n}\Vert^2_{L^2_x}\Vert \nabla {\bf u}_{n}\Vert^2_{L^2_x} \geq R_1^4\biggr\}\, .
\end{eqnarray*}
We set ${\mathfrak s}_{R_1}^{\tt d}=t_M$ if the set above is empty.
 Note that ${\mathfrak s}_{R_1}^{\tt d} \in \{t_m\}_{m=0}^M$, 
 with random index 
 ${\mathfrak j}_{R_1} \in {\mathbb N}_0 \cap [0,M]$, such that ${\mathfrak s}_{R_1}^{\tt d} = t_{{\mathfrak j}_{R_1}}$. Now we use the discrete energy estimate \eqref{lem:3.1a} (for $q=2$), and Markov's inequality to validate (assuming that $\bfu_0\in L^{4}(\Omega,W^{1,2}_{0,\Div}(\mt;\R^2))$)
\begin{eqnarray}\nonumber
{\mathbb P}\bigl[ {\mathfrak s}_{R_1}^{\tt d} < T\bigr]
&\leq& {\mathbb P}\biggl[ \sum_{n=0}^M \tau \Vert {\bf u}_n\Vert^2_{L^2_x} \Vert \nabla {\bf u}_n\Vert^2_{L^2_x} \geq R_1^4\biggr] \\ \label{proof1zz}
&\leq& \frac{1}{R_1^4} {\mathbb E}\biggl[ \sum_{n=0}^M \tau \Vert {\bf u}_n\Vert^2_{L^2_x} \Vert \nabla {\bf u}_n\Vert^2_{L^2_x}\biggr]
\leq \frac{c}{R_1^4}\, ,
\end{eqnarray}
 and thus 
 \begin{align}
 \lim_{R_1 \rightarrow \infty} {\mathbb P}\bigl[ {\mathfrak s}_{R_1}^{\tt d} < T\bigr] = 0.
 \end{align}

 Similarly to Lemma \ref{lem:reg} higher order estimates can only be achieved with the help of a stopping time. 
 We now derive some uniform estimates for the solution of the time-discrete problem, which hold up to the discrete stopping time ${\mathfrak s}_{R_1}^{\tt d}$ with $R_1>0$.

\begin{lemma}\label{lemma:3.1} 
Assume that $\bfu_0\in L^{2q}(\Omega,W^{1,2}_{0,\Div}(\mt;\R^2))$ for some $q\in\N$. Suppose that $\Phi\in L_2(\mathfrak U;W^{1,2}_{0,\Div}(\mt;\R^2))$. Then the iterates $(\bfu_m)_{m=1}^M$ given by \eqref{tdiscr}
satisfy the following estimates uniformly in $M$:
\begin{align}
\label{lem:3.1b}\E\bigg[\max_{1\leq m\leq {\mathfrak j}_{R_1} }\|\bfu_m\|^{2q}_{W^{1,2}_x}+\sum_{m=1}^{{\mathfrak j}_{R_1} }\tau\|\bfu_m\|_{W^{1,2}_x}^{2q-2}\|\nabla^2\bfu_m\|^2_{L^2_x}\bigg]&\leq\,ce^{cR^{4}_1},
\end{align}
where $c=c(q,T,\Phi,\bfu_0)>0$ is independent of $R_1$.
\end{lemma}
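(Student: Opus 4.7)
The plan is to test \eqref{tdiscr'} in $L^2_x$ with the Stokes multiplier $-\mathcal A\bfu_m$, yielding a discrete $W^{1,2}$-energy identity, and to close it via a discrete It\^o/Gronwall argument up to the stopping-time envelope $\mathfrak s_R^{\tt d}$. Pairing with $-\mathcal A\bfu_m\in L^2_{\Div}(\mt)$ and using $\langle\bfu_m-\bfu_{m-1},-\mathcal A\bfu_m\rangle_{L^2_x} = \langle\nabla(\bfu_m-\bfu_{m-1}),\nabla\bfu_m\rangle_{L^2_x}$ together with the algebraic identity $(a-b)\cdot a = \tfrac12(|a|^2-|b|^2+|a-b|^2)$ produces
\begin{align*}
\tfrac12\bigl(\|\nabla\bfu_m\|_{L^2_x}^2 &- \|\nabla\bfu_{m-1}\|_{L^2_x}^2 + \|\nabla(\bfu_m-\bfu_{m-1})\|_{L^2_x}^2\bigr) + \mu\tau\|\mathcal A\bfu_m\|_{L^2_x}^2 \\
&= \tau\langle(\nabla\bfu_m)\bfu_{m-1},\mathcal A\bfu_m\rangle_{L^2_x} - \langle\Phi\,\Delta_m W,\mathcal A\bfu_m\rangle_{L^2_x}.
\end{align*}

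The nonlinear term I would control by Ladyzhenskaya in 2D, $\|\bfv\|_{L^4_x}\le c\|\bfv\|_{L^2_x}^{1/2}\|\nabla\bfv\|_{L^2_x}^{1/2}$, Stokes regularity $\|\nabla^2\bfu_m\|_{L^2_x}\le c\|\mathcal A\bfu_m\|_{L^2_x}$, and Young's inequality with exponents $(4/3,4)$, giving
\[
\tau|\langle(\nabla\bfu_m)\bfu_{m-1},\mathcal A\bfu_m\rangle_{L^2_x}| \le \tfrac{\mu\tau}{4}\|\mathcal A\bfu_m\|_{L^2_x}^2 + c\tau\|\bfu_{m-1}\|_{L^2_x}^2\|\nabla\bfu_{m-1}\|_{L^2_x}^2\|\nabla\bfu_m\|_{L^2_x}^2.
\]
The stochastic term I decompose as $\langle\Phi\Delta_m W,\mathcal A\bfu_m\rangle = \langle\Phi\Delta_m W,\mathcal A\bfu_{m-1}\rangle + \langle\Phi\Delta_m W,\mathcal A(\bfu_m-\bfu_{m-1})\rangle$; both inner products can be integrated by parts using $\Phi\in L_2(\mathfrak U;W^{1,2}_{0,\Div}(\mt))$, so the second piece is bounded by $\tfrac14\|\nabla(\bfu_m-\bfu_{m-1})\|_{L^2_x}^2 + c\|\nabla(\Phi\Delta_m W)\|_{L^2_x}^2$ (absorbed into the LHS), while the first piece is a martingale difference by the $\mathfrak F_{t_{m-1}}$-measurability of $\nabla\bfu_{m-1}$.

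For $q\in\mathbb N$ I would then apply the discrete Taylor expansion $a^q-b^q = qb^{q-1}(a-b) + O\bigl(\max(a,b)^{q-2}(a-b)^2\bigr)$ with $a=\|\nabla\bfu_m\|_{L^2_x}^2$, $b=\|\nabla\bfu_{m-1}\|_{L^2_x}^2$, multiplying the energy identity by $q\|\nabla\bfu_{m-1}\|_{L^2_x}^{2q-2}$; the quadratic remainder is absorbed via $\|\nabla(\bfu_m-\bfu_{m-1})\|_{L^2_x}^2$ and $\tau\|\mathcal A\bfu_m\|_{L^2_x}^2$ on the LHS when $\tau$ is small. Summing over $1\le m\le m^\ast\le\mathfrak j_R$, taking sup in $m^\ast$ and expectation, the discrete Burkholder--Davis--Gundy inequality handles the martingale-difference sum (a share of $\tfrac12\mathbb E[\max_m\|\nabla\bfu_m\|_{L^2_x}^{2q}]$ being absorbed into the LHS). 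What is left is of the form $\mathbb E[\max_m A_m^q]\le C + c\,\mathbb E\bigl[\sum_{m\le\mathfrak j_R}\gamma_m A_m^q\bigr]$ with $A_m=\|\nabla\bfu_m\|_{L^2_x}^2$ and $\gamma_m=c\tau\|\bfu_{m-1}\|_{L^2_x}^2\|\nabla\bfu_{m-1}\|_{L^2_x}^2$, where $\sum_{m\le\mathfrak j_R}\gamma_m\le cR^4$ by definition of $\mathfrak s_R^{\tt d}$. A discrete Gronwall lemma then produces the factor $e^{cR^4}$, while the forcing and initial contributions are bounded using Lemma~\ref{lemma:3.1a}.

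The main obstacle is exactly this final Gronwall step, because the amplifier on the right carries $A_m^q$ rather than $A_{m-1}^q$, so a straight discrete Gronwall does not apply. I would resolve it via the elementary inequality $A_m^q\le 2^{q-1}\bigl(A_{m-1}^q + (A_m-A_{m-1})^q\bigr)$: the telescoping difference is absorbed into the already-extracted $\|\nabla(\bfu_m-\bfu_{m-1})\|_{L^2_x}^2$ and dissipation (for $\tau$ sufficiently small), while the $A_{m-1}^q$-part feeds into the standard discrete Gronwall sum controlled by $R^4$. A secondary subtlety is that the integration by parts in the stochastic term requires $\Phi$ to take values in $W^{1,2}_{0,\Div}(\mt)$ with vanishing boundary trace, which is precisely the enhanced hypothesis of the lemma relative to the weaker Lemma~\ref{lem:reg}(b) in the continuous setting.
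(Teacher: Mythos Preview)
Your energy identity, the Ladyzhenskaya bound on the convective term, and the splitting of the noise into a martingale difference plus an increment absorbed by $\|\nabla(\bfu_m-\bfu_{m-1})\|_{L^2_x}^2$ are all correct and coincide with the paper's argument. The genuine gap is the final Gronwall step. After you take expectation and apply BDG you hold a \emph{single scalar} inequality
\[
\mathbb E\Bigl[\max_{m\le\mathfrak j_R} A_m^q\Bigr]\le C + c\,\mathbb E\Bigl[\sum_{m\le\mathfrak j_R}\gamma_m A_m^q\Bigr],
\]
with \emph{random} weights $\gamma_m$. Discrete Gronwall does not apply to this object: it requires a recursion in $m$, not one inequality in which the full sum already sits inside an expectation. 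The only way to use $\sum_{m\le\mathfrak j_R}\gamma_m\le cR^4$ from here is the crude estimate $\mathbb E[\sum\gamma_m A_m^q]\le cR^4\,\mathbb E[\max_m A_m^q]$, which yields $\mathbb E[\max A_m^q]\le C + cR^4\,\mathbb E[\max A_m^q]$ and closes only for small $R$; no exponential factor $e^{cR^4}$ ever arises this way.

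The paper reverses the order of operations: it applies discrete Gronwall \emph{pathwise first}, before any expectation is taken. Summing the energy inequality up to $m\le\mathfrak j_R$ one has $\mathbb P$-a.s.
\[
A_m + (\text{dissipation}) \le A_0 + c\sum_{n\le m}\gamma_n A_n + \mathscr M^1(t_m) + \mathscr M^2_m,
\]
where $\mathscr M^1$ carries the martingale part and $\mathscr M^2$ the increment part of the noise. Pathwise discrete Gronwall, using $\sum_{n\le\mathfrak j_R}\gamma_n\le cR^4$, gives
\[
\max_{m\le\mathfrak j_R}A_m \le c\,e^{cR^4}\Bigl(A_0 + \max_{m\le\mathfrak j_R}|\mathscr M^1(t_m)| + \max_{m\le\mathfrak j_R}|\mathscr M^2_m|\Bigr)
\]
$\mathbb P$-a.s. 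Only \emph{after} this does one take expectation and invoke BDG on $\mathbb E[\max_m|\mathscr M^1(t_m)|]$ and Young plus It\^o isometry on $\mathbb E[\max_m|\mathscr M^2_m|]$; the resulting $\kappa$-terms are absorbed into the left-hand side with $\kappa$ chosen small relative to $e^{-cR^4}$. Your observations about the $A_m$ versus $A_{m-1}$ issue in the Gronwall amplifier and about the necessity of the vanishing-trace hypothesis on $\Phi$ are both correct, but they are secondary to this ordering problem.
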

\begin{proof}
We proceed formally, a rigorous proof can be obtained using a Galerkin approximation.
 In order to prove \eqref{lem:3.1b} we multiply \eqref{tdiscr}
 by $\mathcal A\bfu_m$ and integrate in space yielding
 \begin{align*}
\int_{\mt}\nabla(\bfu_{m}-&\bfu_{m-1}):\nabla\bfu_m\dx +\mu\tau\int_{\mt}|\mathcal A\bfu_m|^2\dx\\&=-\tau\int_{\mt}(\nabla\bfu_m)\bfu_{m-1}\cdot\mathcal A\bfu_m\dx+\int_{\mt}\mathcal A\bfu_m\cdot\Phi\Delta_mW\dx.
\end{align*}
For $\delta>0$ we estimate
\begin{align}\label{eq:1407}
\begin{aligned}
\int_{\mt}(\nabla\bfu_m)\bfu_{m-1}\cdot\mathcal A\bfu_m&\leq\|\bfu_{m-1}\|_{L^4_x}\|\nabla\bfu_m\|_{L^4_x}\|\mathcal A\bfu_m\|_{L^2_x}
\\&\leq\,c\|\bfu_{m-1}\|_{L^2_x}^{\frac{1}{2}}\|\nabla\bfu_{m-1}\|_{L^2_x}^{\frac{1}{2}}\|\nabla\bfu_m\|_{L^2_x}^{\frac{1}{2}}\|\mathcal A\bfu_m\|_{L^2_x}^{\frac{3}{2}}\\&\leq\,c(\delta)\|\bfu_{m-1}\|_{L^2_x}^2\|\nabla\bfu_{m-1}\|_{L^2_x}^2\|\nabla\bfu_m\|_{L^2_x}^2+\delta\|\mathcal A\bfu_m\|_{L^2_x}^{2}.
\end{aligned}
\end{align}
Summing up then shows
 \begin{align*}
\tfrac{1}{2}\int_{\mt}|\nabla\bfu_{m}|^2\dx&+\tfrac{1}{2}\sum_{n=1}^m\int_{\mt}|\nabla(\bfu_{n}-\bfu_{n-1})|^2\dx +\tfrac{\mu}{2} \sum_{n=1}^m\tau\int_{\mt}|\A\bfu_{n}|^2\dx\\&\leq \tfrac{1}{2} \int_{\mt}|\nabla\bfu_{0}|^2\dx+c \sum_{n=1}^m\tau\|\bfu_{n-1}\|_{L^2_x}^2\|\nabla\bfu_{n-1}\|_{L^2_x}^2\|\nabla\bfu_n\|_{L^2_x}^2\\&+\mathscr M^1(t_m)+\mathscr M_m^2,
\end{align*}
where
\begin{align*}
\mathscr M^1(t)&=\int_{\mt}\int_{0}^{t}\sum_{n=1}^M\mathbf1_{[t_{n-1},t_n)}\Phi\,\dd W\cdot \A\bfu_{n-1}\dx,\\
\mathscr M_m^2&=\sum_{n=1}^m\int_{\mt}\int_{t_{n-1}}^{t_n}\Phi\,\dd W\cdot \A(\bfu_n-\bfu_{n-1})\dx.
\end{align*}
By the discrete Gronwall lemma we have $\p$-a.s.
 \begin{align*}
\tfrac{1}{2}\max_{1\leq m\leq {\mathfrak j}_{R_1}}\int_{\mt}|\nabla\bfu_{m}|^2\dx&+\tfrac{1}{2}\sum_{n=1}^{{\mathfrak j}_{R_1}}\int_{\mt}|\nabla(\bfu_{n}-\bfu_{n-1})|^2\dx +\tfrac{\mu}{2} \sum_{n=1}^{{\mathfrak j}_{R_1}}\tau\int_{\mt}|\A\bfu_{n}|^2\dx\\&\leq ce^{cR^2_1}\bigg( \int_{\mt}|\nabla\bfu_{0}|^2\dx+\max_{1\leq m\leq {\mathfrak j}_{R_1}}|\mathscr M^1(t_m)|+\max_{1\leq m\leq {\mathfrak j}_{R_1}}|\mathscr M_m^2|\bigg)
\end{align*}
using that
\begin{align*}
\sum_{n=1}^{{\mathfrak j}_{R_1}}\tau\|\bfu_{n-1}\|_{L^2_x}^2\|\nabla\bfu_{n-1}\|_{L^2_x}^2\leq R_1^4
\end{align*}
by the definition of  ${\mathfrak j}_{R_1}$.
Since $\bfu_{m-1}$ is $\mathfrak F_{t_{m-1}}$-measurable and $\mathfrak s_{R_1}^{\tt d}$ is an $(\mathfrak F_{t_{m}})$-stopping time we know that $\mathscr M^1(t\wedge\mathfrak s_{R_1}^{\tt d})$ is an $(\mathfrak F_t)$-martingale. Consequently,
 by Burkholder-Davis-Gundy inequality, $\Phi\in L_2(\mathfrak U;W^{1,2}_{0,\Div}(\mt;\R^2))$ and Young's inequality, and for $\kappa>0$
\begin{align}\nonumber
\E\bigg[\max_{1\leq m\leq {\mathfrak j}_{R_1}}\big|\mathscr M^1(t_m)\big|\bigg]&\leq \E\bigg[\max_{s\in[0,T]}\big|\mathscr M^1(s\wedge\mathfrak s_{R_1}^d)\big|\bigg]\\ \nonumber
&\leq\,c\,\E\bigg[\bigg(\int_{0}^{T\wedge\mathfrak s_{R_1}^{\tt d}}\sum_{n=1}^M\mathbf1_{[t_{n-1},t_n)}\|\Phi\|^2_{L_2(\mathfrak U,L^2_x)}\|\A\bfu_{n-1}\|^2_{L^2_x}\ds\bigg)^{\frac{1}{2}}\bigg]\\ \label{term1}
&\leq\,c\,\E\bigg[\bigg(\sum_{n=1}^{{\mathfrak j}_{R_1}-1}\|\A\bfu_{n}\|^2_{L^2_x}\bigg)^{\frac{1}{2}}\bigg]\\ \nonumber
&\leq\,c(\kappa)+\,\kappa  \E\bigg[\sum_{n=1}^{{\mathfrak j}_{R_1}-1}\tau\|\A\bfu_{n}\|_{L^2_x}^2\bigg].
\end{align} 
Furthermore, we have
\begin{align}\nonumber
\E\bigg[\max_{1\leq m\leq {\mathfrak j}_{R_1}}|\mathscr M^2_{m}|\bigg]&\leq \,\kappa\,\E\bigg[  \sum_{n=1}^{{\mathfrak j}_{R_1}}\big\|\nabla(\bfu_{n}-\bfu_{n-1})\big\|_{L^2_x}^2\bigg]+c_\kappa\,\E\bigg[\sum_{n=1}^{{\mathfrak j}_{R_1}}\bigg\| \int_{t_{n-1}}^{t_n}\nabla\Phi\,\dd W  \bigg\|_{L^2_x}^2\bigg]\\ \nonumber
&\leq \,\kappa\,\E\bigg[ \sum_{n=1}^{{\mathfrak j}_{R_1}}\|\nabla(\bfu_{n}-\bfu_{n-1})\|_{L^2_x}^2 \bigg]+ c_\kappa\,\E\bigg[\tau\sum_{n=1}^M\|\Phi\|_{L_2(\mathfrak U;W^{1,2}_x)}^2\bigg]\\ \label{term2}
&\leq \,\kappa\,\E\bigg[\sum_{n=1}^{{\mathfrak j}_{R_1}} \|\nabla(\bfu_{n}-\bfu_{n-1})\|_{L^2_x}^2 \bigg]+ c_\kappa
\end{align}
due to Young's inequality, It\^{o}-isometry and $\Phi\in L_2(\mathfrak U;W^{1,2}_{0,\Div}(\mt;\R^2))$. Absorbing the $\kappa$-terms we conclude
(b) for $q=1$. The case $q\geq 2$ follows similarly by multiplying
with $\|\bfu_m\|_{W^{1,2}_x}^{2q-2}$ and iterating (see \cite[Lemma 3.1]{BCP} for details).
\end{proof}

 For $R_2>0$, we define the (discrete) $(\mathfrak{F}_{t_{m}})$-stopping time
\begin{eqnarray*} {\mathfrak t}_{R_2}^{\tt d} &:=& \min_{0 \leq m \leq M} \biggl\{ t_m:\ \Vert \nabla {\bf u}_m\Vert^2_{L^2_x}+\sum_{n=1}^{m}\tau\|\nabla^2\bfu_n\|^2_{L^2_x}\ \geq R_2^2\biggr\}\,,
\end{eqnarray*}
and set ${\mathfrak t}_{R_2}^{\tt d}=t_M$ if the set above is empty.
 Note that 
${\mathfrak t}_{R_2}^{\tt d} \in \{t_m\}_{m=0}^M$, with random index ${\mathfrak i}_{R_2} \in {\mathbb N}_0 \cap [0,M]$, such that ${\mathfrak t}_{R_2}^{\tt d} = t_{{\mathfrak i}_{R_2}}$. The crucial observation is now
that
\begin{equation}\label{lemma:3.1c}
\lim_{R_1 \rightarrow \infty}{\mathbb P}\bigl[{\mathfrak t}_{R_2(R_1)}^{\tt d} < T\bigr] = 0\, ,
\end{equation}
provided we choose $R_2=R_2(R_1)$ such that $e^{cR_1^4}=o(R_2(R_1)^2)$.
We argue as in \cite[Sec.~4.3]{GlZi} to show (\ref{lemma:3.1c}) and estimate
 \begin{equation}\label{proof1zzz} {\mathbb P}\bigl[{\mathfrak t}_{R_2}^{\tt d} < T\bigr] \leq
 {\mathbb P} \bigl[ \{ {\mathfrak t}_{R_2}^{\tt d} < T\}\cap \{ {\mathfrak s}_{R_1}^{\tt d} \geq T\} \bigr] + {\mathbb P}\bigl[{\mathfrak s}_{R_1}^{\tt d} < T\bigr]\, .
 \end{equation}
 By the notation introduced before this lemma, the first term on the right-hand side of (\ref{proof1zzz}) can thus be bounded by
 \begin{align}\label{proof1zzy}
 \begin{aligned}
 {\mathbb P}\biggl[\biggl\{\Vert \nabla {\bf u}_{{\mathfrak i}_{R_2}}\Vert^2_{L^{2}_x}&+\sum_{n=1}^{{\mathfrak i}_{R_2}}\tau\|\nabla^2\bfu_n\|^2_{L^2_x} \geq R_2^2 \biggr\} \cap \{ {\mathfrak s}_{R_1}^{\tt d} \geq T\}\biggr]\\
& \leq {\mathbb P}\biggl[ \biggl\{\Vert \nabla {\bf u}_{{\mathfrak i}_{R_2} \wedge {\mathfrak j}_{R_1}}\Vert^2_{L^{2}_x} +\sum_{n=1}^{{\mathfrak i}_{R_2}\wedge \mathfrak j_{R_1}}\tau\|\nabla^2\bfu_n\|^2_{L^2_x}\geq R_2^2 \biggr\}\biggr]\, .
 \end{aligned}
 \end{align}
 By Markov's inequality, the right-hand side is bounded by
$$\frac{1}{R_2^2} {\mathbb E}\biggl[ \Vert \nabla {\bf u}_{{\mathfrak i}_{R_2} \wedge {\mathfrak j}_{R_1}}\Vert^2_{L^{2}_x}+\sum_{n=1}^{{\mathfrak i}_{R_2}\wedge \mathfrak j_{R_1}}\tau\|\nabla^2\bfu_n\|^2_{L^2_x}\biggr]\, .$$
%
Using Lemma \ref{lemma:3.1} and choosing $R_2$ such that $e^{cR_1^4}=o(R_2^2)$, we may deduce  
\begin{equation}\label{proof1zzx}
\lim_{R_2 \rightarrow \infty} \frac{1}{R_2^2} {\mathbb E}\biggl[ \Vert \nabla {\bf u}_{{\mathfrak i}_{R_2} \wedge {\mathfrak j}_{R_1}}\Vert^2_{L^{2}_x}+\sum_{n=1}^{{\mathfrak i}_{R_2} \wedge \mathfrak j_{R_1}}\tau\|\nabla^2\bfu_n\|^2_{L^2_x}\biggr] = 0\,,
\end{equation}
 and then (\ref{lemma:3.1c}) follows  from (\ref{proof1zzz}).\\\

 The equation (\ref{tdiscr}) has been stated for solenoidal test functions
$\bfphi\in W^{1,2}_{0,\Div}(\mt;\R^2)$; for $\bfphi\in  W^{1,2}_{0}(\mt;\R^2)$, the additional term
$$ -{\tau} \int_{\mathcal O} p_m {\rm div}\, \bfphi\,{\rm d}x $$
would appear on the left-hand side of (\ref{tdiscr}). The ${\mathbb P}$-a.s.~unique solvability for
$({\bf u}_m, p_m) \in W^{1,2}_{0, {\rm div}}(\mt;\R^2) \times L^2(\mt)/{\mathbb R}$ follows from standard arguments. Moreover, exploiting that $\Phi\in L_2(\mathfrak U;L^2_{\Div}(\mt;\R^2))$, we easily deduce
\begin{equation}\label{tdiscrRp}
\E\bigg[ \sum_{m=1}^{\mathfrak{j}_{R_1}} \tau \|\nabla p_m\|^2_{L^2_x} \bigg]\leq\,ce^{cR_1^4},
\end{equation}
from (\ref{lem:3.1b}),
where $c=c(q,T,\Phi,\bfu_0)>0$ is independent of $R_1$: therefore, we start from (\ref{tdiscr}) $\mathbb P$-a.s. in strong form,
\begin{equation}\label{tdiscrRp1} {\bf u}_m - \mu \tau \Delta {\bf u}_m + \tau {\rm div} \bigl({\bf u}_m \otimes {\bf u}_{m-1}\bigr) + \tau \nabla p_m = 
{\bf u}_{m-1} + \Phi \Delta_m W\, ,
 \end{equation}
 which we multiply with $\nabla p_m$ and integrate in space; a standard calculation then leads to
 \begin{equation*}
 \tau \Vert \nabla p_m\Vert^2 \leq C\tau \bigl( \Vert \Delta {\bf u}_m\Vert^2 + \Vert {\bf u}_{m-1}\Vert_{L^2_x}  \Vert \nabla{\bf u}_{m-1}\Vert_{L^2_x} \Vert \nabla {\bf u}_m\Vert_{L^2_x} \Vert \Delta {\bf u}_m\Vert_{L^2_x}\bigr) + \frac{\tau}{2} \Vert \nabla p_m\Vert^2\,.
 \end{equation*}
Hence (\ref{lem:3.1b}) settles (\ref{tdiscrRp}).
\begin{remark}\label{rem}
\begin{enumerate}
\item It is possible to prove Lemma \ref{lemma:3.1} also for nonlinear multiplicative noise provided suitable growth conditions for the derivatives of the diffusion coefficient with respect to the velocity field are assumed.
\item The assumption that the noise vanishes at the boundary is crucial for the estimates in Lemma \ref{lemma:3.1}. Otherwise, we are unable to integrate by parts in $\mathscr M_m^2$ below \eqref{eq:1407}, and it is unclear how to proceed. Interestingly, this limitation does not occur in the time-continuous counterpart of Lemma \ref{lemma:3.1}, see \cite[Lemma 3.1]{BrPr}.
However, it is not needed in Lemma \ref{lemma:3.1} that the noise is solenoidal; this assumption is only needed for the estimate \eqref{tdiscrRp} and will become crucial in the next section.
\item Without the vanishing trace condition for $\Phi$ we are currently unable to control the error between the temporal discretisation and the spatio-temporal discretisation. On account of this we studied in \cite{BrPr} directly the error between the exact solution and its space-time approximation. 
\end{enumerate}
\end{remark}

\section{Asymptotic strong first order error bounds for the time discretisation}\label{sec:add}
 If $\bfu$ is the weak pathwise solution defined on $(\Omega,\mf,(\mf_t)_{t\geq0},\prst)$ with an $(\mf_t)$-cylindrical Wiener process $W$ (recall Definition \ref{def:inc2d} and Theorem \ref{thm:inc2d}), we may consider the transform
\begin{equation}\label{transform} {\bf y}(t) = {\bf u}(t) - \int_0^t \Phi \, {\rm d}W(s)=\bfu(t)-\Phi W(t) \quad \forall\, t \geq 0.
\end{equation}
We denote
\begin{align*}
\mathcal L_1^W(\bfy)&:= 
 \bigl( \nabla[\Phi W] \bigr){\bf y}  ,\quad
\mathcal L_2^W(\bfy):= 
\bigl(\nabla {\bf y}\bigr) [\Phi W]  ,\\
\mathcal L_3^W(t) &:=  \bigl(\nabla [\Phi W]\bigr)[\Phi W],\quad \mathcal L^W:=\mathcal L_1^W+\mathcal L_2^W+\mathcal L^W_3 .
 \end{align*}
Then ${\bf y}: [0,T] \times {\mathcal O} \times \Omega \rightarrow {\mathbb R}^2$ solves the random PDE
\begin{align}\label{eq:SNSy}
\left\{\begin{array}{rc}
\partial_t {\bf y}  =\mu {\mathcal A} {\bf y}-{\mathcal P} \bigl[(\nabla {\bf y}){\bf y}\bigr]  +\mu {\mathcal A} [\Phi W]
-{\mathcal P} \bigl[\mathcal L^W(\bfy)\bigr]
& \mbox{in $\mathcal Q_T$,}\\
\Div {\bf y}=0\qquad\qquad\qquad\qquad\qquad\,\,\,\,& \mbox{in $\mathcal Q_T$,}\\
{\bf y}(0)=\bfu_0\,\qquad\qquad\qquad\qquad\qquad&\mbox{ \,in $\mt$.}\end{array}\right.
\end{align}
Note that for $\mathbb P$-a.a. $\omega\in\Omega$
the function $\bfy(\omega,\cdot)$ is a solution to the Navier--Stokes equations with right-hand side
\begin{align*}
\bff:=\mu {\mathcal A} [\Phi W]
-{\mathcal P} \bigl[\mathcal L^W(\bfy)\bigr].
\end{align*} 
Standard regularity results apply provided $\Phi$ is sufficiently regular.
In particular, $\partial_t {\bf y} \in L^2\bigl( 0,T; L^2_{\rm div}({\mathcal O};\R^2) \bigr)$ holds ${\mathbb P}$-a.s.
\begin{lemma}\label{lem:regadditive}
Suppose $\bfu_0\in L^r(\Omega;L^2_{\Div}(\mt;\R^2))$ for some $r>2$ and $\Phi\in L_2(\mathfrak U;W^{1,2}_{0,\Div}(\mt;\R^2))$. Let $\bfu$ be the unique weak pathwise solution to \eqref{eq:SNS}.
\begin{enumerate} 
\item[(a)] Assume additionally $\bfu_0\in W^{1,2}_{0,\Div}(\mt;\R^2)$ $\mathbb P$-a.s.~and $\Phi\in L_2(\mathfrak U;W^{2,2}(\mt;\R^2))$. Then $\partial_t {\bf y} \in L^2\bigl( 0,T; L^2_{\rm div}({\mt};\R^2) \bigr)$  $\mathbb P$-a.s.~and for a.a. $t\in(0,T)$
\begin{align}\label{eq:W22y}
\int_{\mt}|\partial_t\bfy|^2\dx\leq\,c\,\Big[\|\Phi W\|_{W^{2,2}_x}^2+ \|\bfu\|_{W^{2,2}_x}^2+\|\Phi W\|_{W^{1,2}_x}^4+\|\bfu\|_{W^{1,2}_x}^4\Big].
\end{align}
\item[(b)] Assume additionally $\bfu_0\in W^{2,2}\cap W^{1,2}_{0,\Div}(\mt;\R^2)$ $\mathbb P$-a.s.~and $\Phi\in L_2(\mathfrak U;W^{3,2}(\mt;\R^2))$. Then $\partial_t {\bf y} \in L^2\bigl( 0,T; W^{1,2}_{\rm div}({\mt};\R^2) \bigr)$  $\mathbb P$-a.s.~and
\begin{align}\label{eq:W32y}
\int_{\mathcal Q_T}|\partial_t\nabla\bfy|^2\dxt\leq\,c\,\int_0^T\Big[\|\Phi W\|_{W^{3,2}_x}^2+\|\bfu\|_{W^{3,2}_x}^2+\|\Phi W\|_{W^{2,2}_x}^4+\|\bfu\|_{W^{2,2}_x}^4\Big]\dt.
\end{align}
\end{enumerate}
\end{lemma}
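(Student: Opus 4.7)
\medskip
\noindent\textbf{Proof plan.}
The approach is to observe that the random PDE~\eqref{eq:SNSy} collapses to a remarkably simple expression in terms of $\bfu$: using $\bfy = \bfu - \Phi W$ we have $\mu\mathcal{A}\bfy + \mu\mathcal{A}[\Phi W] = \mu\mathcal{A}\bfu$, and direct expansion yields
$(\nabla\bfy)\bfy + \mathcal{L}^W(\bfy) = \bigl(\nabla(\bfy+\Phi W)\bigr)(\bfy+\Phi W) = (\nabla\bfu)\bfu$,
so that
\begin{equation*}
\partial_t\bfy \;=\; \mu\mathcal{A}\bfu - \mathcal{P}\bigl[(\nabla\bfu)\bfu\bigr]\qquad\text{a.e.\ in }(0,T),\ \mathbb{P}\text{-a.s.}
\end{equation*}
With this pointwise identity in hand, both assertions reduce to Sobolev-norm estimates on the right-hand side, and the spatial regularity supplied by Lemma~\ref{lem:reg}(b),(c) under the respective hypotheses on $\bfu_0$ and $\Phi$ makes these right-hand sides well defined in the required spaces; in particular the membership $\partial_t\bfy \in L^2\bigl(0,T;L^2_{\mathrm{div}}(\mt)\bigr)$ $\mathbb{P}$-a.s.\ follows immediately, and the improved membership in $L^2\bigl(0,T;W^{1,2}_{\mathrm{div}}(\mt)\bigr)$ is obtained once the stronger hypotheses of~(b) are in force. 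As in the proof of Lemma~\ref{lemma:3.1}, rigorous justification goes through a Galerkin approximation.

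For part~(a) I would take the $L^2_x$-norm, producing
$\Vert\partial_t\bfy\Vert_{L^2_x}^2 \leq c\Vert\bfu\Vert_{W^{2,2}_x}^2 + c\Vert(\nabla\bfu)\bfu\Vert_{L^2_x}^2$.
The nonlinear term is controlled by the 2D Ladyzhenskaya inequalities $\Vert\nabla\bfu\Vert_{L^4_x}^2 \leq c\Vert\bfu\Vert_{W^{1,2}_x}\Vert\bfu\Vert_{W^{2,2}_x}$ and $\Vert\bfu\Vert_{L^4_x}^2 \leq c\Vert\bfu\Vert_{L^2_x}\Vert\bfu\Vert_{W^{1,2}_x}$, followed by Young's inequality to split off the top-order factor $\Vert\bfu\Vert_{W^{2,2}_x}$. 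The leftover $\Vert\bfu\Vert_{L^2_x}^2$ is absorbed into the constant via the uniform energy bound from Lemma~\ref{lem:reg}(a). Re-expressing $\bfu = \bfy + \Phi W$ and using the triangle inequality at the relevant Sobolev level then delivers~\eqref{eq:W22y}. Part~(b) is the same scheme one derivative higher: differentiating gives $\partial_t\nabla\bfy = \mu\nabla\mathcal{A}\bfu - \nabla\mathcal{P}\bigl[(\nabla\bfu)\bfu\bigr]$, the linear part is bounded by $c\Vert\bfu\Vert_{W^{3,2}_x}^2$, and the nonlinearity decomposes via the product rule into $(\nabla^2\bfu)\bfu + (\nabla\bfu)(\nabla\bfu)$; each piece is treated with the same Gagliardo--Nirenberg/Young toolbox, but with indices shifted by one (so that $W^{2,2}$ and $W^{3,2}$ replace $W^{1,2}$ and $W^{2,2}$). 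A time integration over $(0,T)$ concludes~\eqref{eq:W32y}.

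The main obstacle I anticipate is calibrating the Young inequalities to land on \emph{precisely} the stated homogeneity: naive Gagliardo--Nirenberg interpolation of $(\nabla\bfu)\bfu$ in 2D naturally produces mixed terms of total degree six in $\Vert\bfu\Vert_{W^{1,2}_x}$-type norms, so one must carefully push the lowest-order $L^2_x$ factors into the constant using the $\omega$-pathwise energy bounds of Lemma~\ref{lem:reg}(a) in order to arrive at the sharp form $\Vert\cdot\Vert_{W^{k+1,2}_x}^2 + \Vert\cdot\Vert_{W^{k,2}_x}^4$ (with $k=1$ in~(a) and $k=2$ in~(b)) rather than a cubic or sixth-power alternative.
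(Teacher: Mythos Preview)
Your approach is essentially the same as the paper's: take the $L^2_x$ (resp.\ $W^{1,2}_x$) norm of the right-hand side of \eqref{eq:SNSy} and control the quadratic terms via Ladyzhenskaya's inequality, with Lemma~\ref{lem:reg} supplying the needed regularity of $\bfu$. Your preliminary simplification to $\partial_t\bfy = \mu\mathcal{A}\bfu - \mathcal{P}[(\nabla\bfu)\bfu]$ is correct and a mild streamlining the paper does not make explicit.

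Two small corrections. First, you cannot ``absorb $\Vert\bfu\Vert_{L^2_x}^2$ into the constant via Lemma~\ref{lem:reg}(a)'': that lemma gives only \emph{moment} bounds, not a deterministic pathwise bound, so the constant $c$ in \eqref{eq:W22y} would become random. The simple fix is to use $\Vert\bfu\Vert_{L^2_x}\leq\Vert\bfu\Vert_{W^{1,2}_x}$ instead; this yields $\Vert(\nabla\bfu)\bfu\Vert_{L^2_x}^2 \leq c\Vert\bfu\Vert_{W^{1,2}_x}^3\Vert\bfu\Vert_{W^{2,2}_x}$, and the exact homogeneity issue you flag then remains (the paper's one-line proof does not resolve it either, and for the downstream Corollary~\ref{cor:add} any polynomial form suffices). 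Second, your final step ``re-expressing $\bfu=\bfy+\Phi W$'' is unnecessary: the target inequality \eqref{eq:W22y} is already stated in terms of $\Vert\bfu\Vert$ and $\Vert\Phi W\Vert$, so once you have an estimate purely in $\Vert\bfu\Vert_{W^{2,2}_x}^2$ and $\Vert\bfu\Vert_{W^{1,2}_x}^4$ you are done --- the $\Vert\Phi W\Vert$ terms on the right-hand side are nonnegative and can simply be dropped from below.
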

\begin{proof}
The proof follows directly from \eqref{eq:SNSy}$_1$ by estimating the right-hand side in $L^2_x$ and $W^{1,2}_x$ respectively. This only uses Ladyshenskaya's inequality to estimate the quadratic terms.
Note that our assumptions imply sufficient regularity of $\bfu$ by Lemma \ref{lem:reg}.
\end{proof}
Combining Lemmas \ref{lem:regadditive} and \ref{lem:reg} we obtain the following.
\begin{corollary}\label{cor:add}
Assume that $\bfu_0\in L^r(\Omega,W^{2,2}_{\Div}(\mt;\R^2))\cap L^{5r}(\Omega,W^{1,2}_{0,\Div}(\mt;\R^2))$ for some $r\geq2$ and that $\Phi\in L_2(\mathfrak U;W^{3,2}(\mt;\R^2))\cap L_2(\mathfrak U;W^{1,2}_{0,\Div}(\mt;\R^2))$. Let $\bfu$ be the unique weak pathwise solution to \eqref{eq:SNS}.
Then we have for any $R>0$
\begin{align*}
\E\bigg[\Bigl(\sup_{0\leq t\leq T}\int_{\mt}|\partial_t\bfy(t\wedge\mathfrak t_R)|^2\dx\Bigr)^{\frac{r}{2}}\bigg]&\leq c(T,\Phi,\bfu_0)R^{2r},\\\E\bigg[\Bigl(\int_0^{\mathfrak t_R}\int_{\mathcal O}|\partial_t\nabla\bfy|^2\dxt\Bigr)^{\frac{r}{2}}\bigg]&\leq\,c(T,\Phi,\bfu_0)R^{2r},\\ \E\bigg[\Bigl(\sup_{0\leq t\leq T}\int_{\mt}|\nabla\bfy(t\wedge\mathfrak t_R)|^2\dx\Bigr)^{\frac{r}{2}}\bigg]&\leq c(T,\Phi,\bfu_0)R^{r}.
\end{align*}
\end{corollary}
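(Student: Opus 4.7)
The proof is a direct combination of Lemmas \ref{lem:regadditive} and \ref{lem:reg} with classical Burkholder--Davis--Gundy moment estimates for the Wiener term $\Phi W$. The key observation that keeps the moment book-keeping within the given data regularity is the deterministic bound $\sup_{t\leq \mathfrak t_R}\|\bfu(t)\|_{W^{1,2}_x}\leq R$ that is built into the stopping time $\mathfrak t_R$ through Definition \ref{def:maxsol}.

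The third estimate on $\nabla\bfy$ is the simplest: I would decompose $\nabla\bfy = \nabla\bfu - \nabla[\Phi W]$, take the supremum over $[0,T\wedge\mathfrak t_R]$, raise to the $r/2$-power, and pass to expectation. The $\bfu$-part contributes $cR^r$ by Lemma \ref{lem:reg}(b), and the stochastic part is $R$-independent via BDG applied to the $W^{1,2}_{0,\Div}$-valued martingale $t\mapsto\Phi W(t)$ using $\Phi\in L_2(\mathfrak U;W^{1,2}_{0,\Div}(\mt))$. For the first estimate I would apply $\sup_{t}$ to the pointwise inequality \eqref{eq:W22y}, expand via $(a+b+c+d)^{r/2}\lesssim a^{r/2}+b^{r/2}+c^{r/2}+d^{r/2}$, and split the four resulting contributions: the two $\Phi W$-norms yield $R$-free constants via BDG (using $\Phi\in L_2(\mathfrak U;W^{3,2})\subset L_2(\mathfrak U;W^{2,2})$), the term $\sup\|\bfu\|_{W^{2,2}_x}^r$ is bounded by $cR^r$ thanks to Lemma \ref{lem:reg}(c), and the dominant term $\sup\|\bfu\|_{W^{1,2}_x}^{2r}$ is bounded $\omega$-pointwise by $R^{2r}$ directly from the stopping-time definition. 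This delivers the asserted $cR^{2r}$.

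For the second estimate I would integrate \eqref{eq:W32y} over $[0,\mathfrak t_R]$, raise to the $r/2$-power, and take expectation. The integral $\int_0^{\mathfrak t_R}\|\bfu\|_{W^{3,2}_x}^2\dt$ is controlled by the $\int\|\nabla^3\bfu\|^2$ dissipation in Lemma \ref{lem:reg}(c) (together with the stopping-time bound on the lower-order parts of the norm), producing a contribution of order $cR^r$. The quartic term $\int_0^{\mathfrak t_R}\|\bfu\|_{W^{2,2}_x}^4\,\dt$ is the main obstacle, because a naive Cauchy--Schwarz would force moments of $\|\bfu\|_{W^{2,2}_x}$ beyond the $L^r(\Omega;W^{2,2})$ data regularity. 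To bypass this, I would invoke the spatial interpolation inequality $\|\nabla^2\bfu\|_{L^2_x}^2\leq c\|\nabla\bfu\|_{L^2_x}\|\nabla^3\bfu\|_{L^2_x}$, which combined with the deterministic $W^{1,2}$-bound up to $\mathfrak t_R$ yields
\begin{align*}
\int_0^{\mathfrak t_R}\|\bfu\|_{W^{2,2}_x}^4\,\dt \,\leq\, c\,T\,R^4 \,+\, c\,R^2\int_0^{\mathfrak t_R}\|\nabla^3\bfu\|_{L^2_x}^2\,\dt\,.
\end{align*}
Raising to the $r/2$-power and applying Lemma \ref{lem:reg}(c) at level $r$ then gives the required bound $cR^{2r}$, while the two $\Phi W$-integrals are again controlled uniformly in $R$ by BDG together with $\Phi\in L_2(\mathfrak U;W^{3,2})$. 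The whole argument is thus essentially bookkeeping once the interpolation step is identified as the right device to exchange missing $W^{2,2}$-moments for the cheap deterministic $W^{1,2}$-bound on $[0,\mathfrak t_R]$.
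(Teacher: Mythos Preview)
Your proposal is correct and follows the paper's approach, which consists of the single line ``Combining Lemmas \ref{lem:regadditive} and \ref{lem:reg} we obtain the following.'' You have filled in precisely the details that make this combination work; in particular, the interpolation step $\|\nabla^2\bfu\|_{L^2_x}^2\lesssim \|\nabla\bfu\|_{L^2_x}\|\nabla^3\bfu\|_{L^2_x}$ you invoke for the second estimate is the right device to keep the required moments of $\|\bfu\|_{W^{2,2}_x}$ within the given $L^r(\Omega;W^{2,2})$ data regularity, and the paper's one-line justification presumably relies on exactly this (or an equivalent) argument.
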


\subsection{A stable time-discretisation}
\label{sec:3.1}
Due to the time-regularity of $\bfy$ stated in Lemma \ref{lem:regadditive} above we expect strong order (up to) $1$ in probability for the semi-implicit temporal discretisation of \eqref{eq:SNS}. We consider an equidistant partition of $[0,T]$ with mesh size $\tau=T/M$ and set $t_m=m\tau$. 
Let $\bfu_{0}$ be an $\mathfrak F_0$-measurable random variable with values in $W^{1,2}_{0,\Div}(\mt;\R^2)$. Furthermore, we assume that  $\Phi\in L_2(\mathfrak U;W^{1,2}_{0,\Div}(\mt;\R^2))$.
Given $\bfy_0=\bfu_0$ we seek for an $\mf_{t_m}$-measurable, $W^{1,2}_{0,\rm div}(\mt;\R^2)$-valued random variable ${\bf y}_m$ ($1 \leq m \leq M$) such that
\begin{align}\label{tdiscr-add}
\begin{aligned}
\frac{{\bf y}_{m} - {\bf y}_{m-1}}{\tau} - \mu {\mathcal A} {\bf y}_m &= \mu {\mathcal A}[\Phi W(t_m)] - {\mathcal P}\bigl[ (\nabla {\bf y}_{m}){\bf y}_{m-1} \bigr]+{\mathcal P}\bigl[ \mathcal L^{m}(\bfy_{m-1},\bfy_m)\bigr],\\
\text{where}\quad \mathcal L^{m}(\bfy_{m-1},\bfy_m)&=\mathcal L_1^{W(t_m)}(\bfy_{m-1})+\mathcal L_2^{W(t_{m-1})}(\bfy_{m})+\mathcal L_1^{W(t_m)}(\Phi W(t_{m-1})).
\end{aligned}
\end{align}
This system can be written (for $\mathbb P$-a.a.~$\omega\in\Omega$) as steady Navier--Stokes problem with
forcing $\mu {\mathcal A}[\Phi W(t_m)] $ perturbed by the linear term ${\mathcal P}\bigl[ \mathcal L^{m}(\bfy_{m-1},\bfy_m)\bigr]$. Clearly we have a continuous dependence on $\bfy_{m-1}$, $W(t_{m-1})$ and $W(t_m)$ which yields the correct measurability. If also $\Phi\in L_2(\mathfrak U;W^{2,2}(\mt;\R^2))$, we have more regularity and it holds $\bfy_m\in W^{2,2}(\mt;\R^2)$ $\mathbb P$-a.s. (and similarly for $W^{3,2}(\mt;\R^2)$ instead of $W^{2,2}(\mt;\R^2)$).
Setting 
\begin{equation}\label{identd1}
{\bf u}_m := {\bf y}_m  +\Phi W(t_m) \qquad (1 \leq m \leq M)\,,
\end{equation}
accordingly gives with $\Delta_m W := W(t_m) - W(t_{m-1})$
\begin{equation}\label{help-1}({\bf u}_m - {\bf u}_{m-1}\bigr) - \mu \tau {\mathcal A}{\bf u}_m + \tau {\mathcal P}(\nabla{\bf u}_m ){\bf u}_{m-1} = \Phi\Delta_m W \qquad (1 \leq m \leq M)
\end{equation}
and ${\bf u}_0 = {\bf y}_0$, 
which has been considered in Section \ref{sec:disest}.
Inequality \eqref{i} of the following lemma is based on corresponding stability bounds for 
$(\bfu_m)_{m=1}^M$ from Lemma \ref{lemma:3.1}. We define
\begin{eqnarray}\label{eq:tRd} 
\tilde{\mathfrak t}_{R_1}^{\tt d} &:=& \min_{0 \leq m \leq M} \biggl\{ t_m\leq \mathfrak t_{R_1}:\ \sup_{t\in[0,t_m]}\|\Phi (W_{t})\|_{W^{2,2}_x} \geq R_2(R_1)\biggr\}\, \wedge {\mathfrak s}_{R_1}^{\tt d}\wedge {\mathfrak t}_{R_2(R_1)}^{\tt d} ,
\end{eqnarray}
where the minimum of the empty set is defined as $t_M$ and $R_2(R_1)$ is chosen in accordance with \eqref{eq:2008} below. 
Finally, $\mathfrak m_{R_1}$ denotes the unique index in $\{1,\dots,M\}$ with
$t_{\mathfrak m_{R_1}}=\tilde{\mathfrak t}_{R_1}^{\tt d}$. 
Since $\Phi\in L_2(\mathfrak U;W^{3,2}(\mt;\R^2))$ we clearly have
$$\E\bigg[\sup_{0\leq t\leq t_M}\|\Phi W(t)\|_{W^{2,2}_x}\bigg]\leq\,c.$$
Hence we can control the size of $\{\tilde{\mathfrak t}_{R_1}^{\tt d} < T\} $ by
 \begin{align}\label{eq:2008} \begin{aligned}{\mathbb P}\bigl[\{\tilde{\mathfrak t}_{R_1}^{\tt d} < T\}\bigr] &\leq
 {\mathbb P} \bigl[ \{ {\mathfrak t}_{R_2(R_1)}^{\tt d} < T\}\big]\\&+  {\mathbb P} \bigl[ \{ {\mathfrak s}_{R_1}^{\tt d} < T\}\big]+\mathbb P\bigg[\sup_{0\leq t\leq t_M}\|\Phi W(t)\|_{W^{2,2}_x}\geq R_2(R_1)\bigg]\\
 &\rightarrow 0,
 \end{aligned}
 \end{align}
provided we choose $R_2$ such that $e^{cR_1^4}=o(R_2(R_1)^2)$ (recall \eqref{lemma:3.1c}). Note that we have $\mathbb P$-a.s.
\begin{align*}
\sup_{0\leq t\leq t_{\mathfrak m_{\tiny{R_1}}}}\Big(\|\Phi W(t)\|_{W^{2,2}_x}^{2}&+\|\bfu(t)\|_{W^{1,2}_x}^{2}+\|\bfy(t)\|_{W^{1,2}_x}^{2}\Big)\\&+\max_{1\leq n\leq \mathfrak m_{R_1}-1}\Vert \nabla {\bf u}_{n}\Vert^2_{L^2_x}+\sum_{n=0}^{\mathfrak m_{R_1}-1} \tau\Vert \nabla^2 {\bf u}_{n}\Vert^2_{L^2_x}\\&+\max_{1\leq n\leq \mathfrak m_{R_1}-1}\Vert \nabla {\bf y}_{n}\Vert^2_{L^2_x}+\sum_{n=0}^{\mathfrak m_{R_1}-1} \tau\Vert \nabla^2 {\bf y}_{n}\Vert^2_{L^2_x}\leq R_2(R_1)^2 .
\end{align*}
\begin{lemma}\label{lem-rand1}
Assume that $\bfu_0\in L^{2q}(\Omega,W^{3,2}(\mt;\R^2))\cap L^{2q+2}(\Omega, W^{1,2}_{0,\Div}(\Omega;\R^2))$ for some $q\in\N$ and that $\Phi\in L_2(\mathfrak U;W^{3,2}(\mt;\R^2))\cap L_2(\mathfrak U;W^{1,2}_{0,\Div}(\mt;\R^2))$. Then the iterates $(\bfy_m)_{m=1}^M$ given by \eqref{tdiscr-add}
satisfy the following estimates uniformly in $M$:
\begin{align}
\label{i}& {\mathbb E}\biggl[ \max_{1\leq m\leq {\mathfrak m}_{R_1}} \Vert \bfy_m\Vert^{2q}_{W^{1,2}_x} +\sum_{m=1}^{\mathfrak m_{R_1}}\tau \Vert \nabla \bfy_m\Vert^{2^q-2}_{L^2_x} \Vert {\mathcal A} \bfy_m\Vert_{L^2_x}^2\biggr]\leq c\,e^{cR_2(R_1)^{4}} ,\\
\label{ii}& {\mathbb E}\biggl[ \max_{1\leq m\leq {\mathfrak m}_{R_1}} \Vert {\bf y}_m\Vert^{2q}_{W^{2,2}_x}+ \sum_{m=1}^{\mathfrak m_{R_1}} \tau \Vert {\mathcal A} \bfy_m\Vert^{2^q-2}_{L^2_x} \Vert \nabla {\mathcal A} \bfy_m\Vert^2_{L^2_x}\biggr]\leq c\,e^{cR_2(R_1)^{4}},\\
\label{iii} &  {\mathbb E}\biggl[\sum_{m=1}^{\mathfrak m_{R_1}} \tau\Vert \bfy_m\Vert^{2^{q}-4}_{W^{2,2}_x}\bigg\Vert \frac{\bfy_m - \bfy_{m-1}}{\tau}\bigg\Vert^{2}_{W^{1,2}_x}\biggr]
\leq ce^{cR_2(R_1)^{4}}\quad (\text{for }q\geq2),
\end{align}
where $c=c(q,T,\bfu_0)>0$ is independent of $R_1$.
\end{lemma}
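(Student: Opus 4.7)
The plan is to carry out three successive energy estimates directly on the random-PDE discretisation \eqref{tdiscr-add} --- in $W^{1,2}_x$, then in $W^{2,2}_x$, and finally for the discrete time derivative --- following the structure of the proof of Lemma \ref{lemma:3.1}. All $\omega$-dependent coefficients coming from $\Phi W$ are pathwise controlled on $[0,t_{\mathfrak m}]$ by $K(R)$ in the $W^{2,2}_x$-norm by definition of $\tilde{\mathfrak t}_R^{\tt d}$, and the pathwise display immediately before the lemma already supplies
\begin{equation*}
\max_{1\le n\le \mathfrak m-1}\Vert\nabla\bfy_n\Vert_{L^2_x}^{2}+\sum_{n=0}^{\mathfrak m-1}\tau\Vert\nabla^{2}\bfy_n\Vert_{L^2_x}^{2}\le K(R)^{2}, \qquad \sum_{n=1}^{\mathfrak m}\tau\Vert\bfy_{n-1}\Vert_{L^2_x}^{2}\Vert\nabla\bfy_{n-1}\Vert_{L^2_x}^{2}\le K(R)^{4},
\end{equation*}
so only the trilinear convective term genuinely needs work in the energy estimates below.

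For \eqref{i} with $q=1$, test \eqref{tdiscr-add} by $\mu\tau\mathcal A\bfy_m$: the time-difference term yields $\tfrac12(\Vert\nabla\bfy_m\Vert^{2}-\Vert\nabla\bfy_{m-1}\Vert^{2}+\Vert\nabla(\bfy_m-\bfy_{m-1})\Vert^{2})$, the viscosity gives $\mu\tau\Vert\mathcal A\bfy_m\Vert^{2}_{L^2}$, and Ladyzhenskaya in 2D bounds the convective term exactly as in \eqref{eq:1407}. The linear perturbations $\mu\mathcal A[\Phi W(t_m)]$ and $\mathcal P[\mathcal L^{m}(\bfy_{m-1},\bfy_m)]$ are absorbed using $\Vert\Phi W\Vert_{W^{2,2}_x}\le K(R)$ together with Sobolev embeddings and Young's inequality. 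Summing, absorbing the $\delta$-terms and applying discrete Gronwall with the second inequality displayed above yields \eqref{i} for $q=1$ with factor $e^{cK(R)^{4}}$; higher-$q$ cases follow by the standard iteration argument of \cite[Lemma 3.1]{BCP}.

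For \eqref{ii}, I would apply $\mathcal A^{1/2}$ to \eqref{tdiscr-add} and test with $\mathcal A^{3/2}\bfy_m$ --- equivalently, test with $\mathcal A^{2}\bfy_m$ after a Galerkin regularisation as in the proof of Lemma \ref{lemma:3.1} --- producing the telescoping identity for $\Vert\mathcal A\bfy_m\Vert^{2}$ and the dissipation $\mu\Vert\nabla\mathcal A\bfy_m\Vert^{2}_{L^2}$. Integration by parts, 2D Ladyzhenskaya and Young bound the convective term by
\begin{equation*}
|\langle(\nabla\bfy_m)\bfy_{m-1},\mathcal A^{2}\bfy_m\rangle|\le c\Vert\bfy_{m-1}\Vert_{W^{1,2}}^{4}\Vert\mathcal A\bfy_m\Vert_{L^{2}}^{2}+\delta\Vert\nabla\mathcal A\bfy_m\Vert_{L^{2}}^{2},
\end{equation*}
and the linear terms remain within the pathwise bound $\Vert\Phi W\Vert_{W^{2,2}_x}\le K(R)$. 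Since \eqref{i} already gives $\sum\tau\Vert\bfy_{m-1}\Vert_{W^{1,2}}^{4}\le cK(R)^{4}$, discrete Gronwall closes the estimate with factor $e^{cK(R)^{4}}$, and the $q\ge 2$ case follows by iteration.

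Finally, \eqref{iii} is obtained by isolating the discrete time derivative from \eqref{tdiscr-add},
\begin{equation*}
\frac{\bfy_m-\bfy_{m-1}}{\tau}=\mu\mathcal A\bfy_m+\mu\mathcal A[\Phi W(t_m)]-\mathcal P\bigl[(\nabla\bfy_m)\bfy_{m-1}\bigr]+\mathcal P[\mathcal L^{m}(\bfy_{m-1},\bfy_m)],
\end{equation*}
taking $W^{1,2}_x$-norms, and bounding each term using \eqref{i}, \eqref{ii}, $\Phi\in L_2(\mathfrak U;W^{3,2}(\mt))$ and the 2D embedding $W^{1,2}\hookrightarrow L^4$ --- with a further iteration against a suitable power of $\Vert\bfy_m\Vert_{W^{2,2}}$ for the larger-$q$ case. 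The principal obstacle is the trilinear bound in step \eqref{ii}: since no-slip data preclude the cancellation $\int(\nabla\bfu)\bfu\cdot\Delta\bfu=0$, the absorption argument hinges on the pathwise control of $\Vert\bfy_{m-1}\Vert_{W^{1,2}}$ provided by $\tilde{\mathfrak t}_R^{\tt d}$; moreover, since $\mathcal A\bfy_m$ does not vanish on $\partial\mathcal O$, the integration-by-parts step requires a Galerkin regularisation, mirroring the restriction to boundary-vanishing, solenoidal $\Phi$ highlighted in Remark \ref{rem}.
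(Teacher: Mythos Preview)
Your outline mirrors the paper's proof almost exactly: the three parts proceed by testing with $\mathcal A\bfy_m$, then with $\nabla\mathcal A\bfy_m$ (after applying $\nabla$), and finally by isolating the discrete time derivative --- precisely the route the paper takes. Two technical points in your treatment of \eqref{ii}, however, need correction.

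First, the convective bound you state,
\[
\bigl|\langle(\nabla\bfy_m)\bfy_{m-1},\mathcal A^{2}\bfy_m\rangle\bigr|\le c\Vert\bfy_{m-1}\Vert_{W^{1,2}}^{4}\Vert\mathcal A\bfy_m\Vert_{L^{2}}^{2}+\delta\Vert\nabla\mathcal A\bfy_m\Vert_{L^{2}}^{2},
\]
does not follow from Ladyzhenskaya and Young alone: after integrating by parts, the contribution $(\nabla\bfy_m)(\nabla\bfy_{m-1})$ forces at least one factor $\Vert\nabla^{2}\bfy_{m-1}\Vert_{L^{2}}$, so $W^{1,2}$-control of $\bfy_{m-1}$ is insufficient. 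The paper instead arrives at $c\Vert\bfy_m\Vert_{W^{2,2}}^{2}\Vert\bfy_{m-1}\Vert_{W^{2,2}}^{2}+\delta\Vert\nabla\mathcal A\bfy_m\Vert_{L^{2}}^{2}$ and closes the discrete Gronwall step via the \emph{pathwise} bound $\sum_{n=1}^{\mathfrak m}\tau\Vert\bfy_{n-1}\Vert_{W^{2,2}}^{2}\le K(R)^{2}$ from the display preceding the lemma --- not via \eqref{i}, which is only an expectation bound.

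Second, in \eqref{ii} the term $\mu\mathcal A[\Phi W(t_m)]$ (and likewise the $\mathcal L^{m}$ contributions) enters through $\Vert\nabla\mathcal A[\Phi W(t_m)]\Vert_{L^{2}}$, which requires $W^{3,2}$-regularity of $\Phi W$; the stopping time only furnishes $\Vert\Phi W\Vert_{W^{2,2}}\le K(R)$ pathwise. The paper therefore handles these terms by moment bounds coming from $\Phi\in L_{2}(\mathfrak U;W^{3,2}(\mt))$ rather than by pathwise control. With these two corrections, your argument coincides with the paper's.
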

\begin{proof}
{\bf 1.} Ad \eqref{i}. The proof is similar to the corresponding estimate \eqref{lem:3.1b} given in Lemma \ref{lemma:3.1}.
Equation 
 \eqref{tdiscr-add} can be rewritten as (see also equation \eqref{eq:SNSy})
 \begin{align*}
\bfy_{m} - {\bf y}_{m-1} - \mu \tau{\mathcal A} {\bf y}_m = \mu\tau {\mathcal A}[\Phi W(t_m)] - \tau{\mathcal P}\bigl[ (\nabla {\bf y}_{m}){\bf y}_{m-1} \bigr]+\tau{\mathcal P}\bigl[ \mathcal L^{m}({\bfy}_{m-1},{\bfy}_m)\bigr].
\end{align*}
Multiplying by $\mathcal A \bfy_{m}$ and integrating in space yields
 \begin{align*}
\int_{\mt}&\nabla(\bfy_{m}-\bfy_{m-1}):\nabla\bfy_m\dx +\mu\tau\int_{\mt}|\mathcal A\bfy_m|^2\dx\\&=\tau\int_{\mt}(\nabla\bfy_m){\bfy}_{m-1}\cdot\mathcal A\bfy_m\dx
-
\mu\tau\int_{\mt}{\mathcal A}[\Phi W(t_m)]\cdot \mathcal A\bfy\dx 
- \tau\int_{\mt}\mathcal L^{m}({\bfy}_{m-1},\bfy_m)\cdot \mathcal A\bfy_m\dx.
\end{align*}
As in \eqref{eq:1407} we have for $\delta>0$
\begin{align*}
\int_{\mt}(\nabla\bfy_m){\bfy}_{m-1}\cdot\mathcal A\bfy_m\dx&\leq\,c(\delta)\|{\bfy}_{m-1}\|_{L^2_x}^2\|\nabla{\bfy}_{m-1}\|_{L^2_x}^2\|\nabla\bfy_m\|_{L^2_x}^2+\delta\|\mathcal A\bfy_m\|_{L^2_x}^{2}
\end{align*}
such that for $\delta$ sufficiently small
 \begin{align}
\nonumber
\tfrac{1}{2}\int_{\mt}|\nabla{\bfy}_{m}|^2\dx&+\tfrac{1}{2}\int_{\mt}|\nabla({\bfy}_{m}-{\bfy}_{m-1})|^2\dx +\tfrac{\mu}{2}\tau\int_{\mt}|\mathcal A{\bfy}_{m}|^2\dx\\
&\leq  
\tfrac{1}{2}\int_{\mt}|\nabla{\bfy}_{m-1}|^2\dx+c \sum_{n=1}^m\tau\|\bfy_{n-1}\|_{L^2_x}^2\|\nabla\bfy_{n-1}\|_{L^2_x}^2\|\nabla\bfy_n\|_{L^2_x}^2\label{eq:2605}\\&+c\,\tau\int_{\mt}|{\mathcal A}[\Phi W(t_m)]|^2\dx+c\,\tau\,\int_{\mt}|\mathcal L^{m}({\bfy}_{m-1},\bfy_m)|^2\dx.\nonumber
\end{align}
Since  $\Phi\in L_2(\mathfrak U;W^{2,2}(\mt))$ we have
\begin{align*}
\E\Bigl[\tau\sum_{m=1}^{M}\int_{\mt}|{\mathcal A}[\Phi W(t_m)]|^2\dx\Bigr]\leq\,c,
\end{align*}
and choosing $m=\mathfrak m_{R_1}$ yields
\begin{align*}
&\E\Bigl[\sum_{m=1}^{\mathfrak m_{R_1}}\tau\int_{\mt}|\mathcal L^{m}(\bfy_{m-1},\bfy_m)|^2\dx\Bigr]\\&\leq\E\bigg[\sum_{m=1}^{\mathfrak m_{R_1}}\tau\big(\|\Phi W(t_{m-1})\|_{W^{1,2}_x}^4+\|\Phi W(t_m)\|_{W^{1,2}_x}^4+\|{\bfu}_{m-1}\|_{W^{1,2}_x}^4+\|{\bfu}_m\|_{W^{1,2}_x}^4\big)\bigg]\leq\,cR_2(R_1)^4
\end{align*}
using also \eqref{lem:3.1b}.
We conclude by the discrete Gronwall lemma
 \begin{align*}
\E\bigg[\max_{1\leq n\leq \mathfrak m_{R_1}}\int_{\mt}|\nabla{\bfy}_{n}|^2\dx\bigg]&+\E\bigg[\sum_{m=1}^{\mathfrak m}\int_{\mt}|\nabla({\bfy}_{m}-{\bfy}_{m-1})|^2\dx\bigg]\\&+\frac{\mu}{2}\E\bigg[\sum_{m=1}^{\mathfrak m_{R_1}}\tau\int_{{\mt}}|\mathcal A{\bfy}_m|^2\dx\bigg]\\&\leq
\,ce^{cR_2(R_1)^4}\,\E\Bigl[\int_{\mt}|\nabla\bfu_{0}|^2\dx+1\Bigr]
\end{align*}
using that
\begin{align*}
\sum_{n=1}^{\mathfrak m_{R_1}}\tau_n\|{\bfy}_{n-1}\|_{L^2_x}^2\|\nabla{\bfy}_{n-1}\|_{L^2_x}^2
\leq R_2(R_1)^4
\end{align*}
by the definition of $\mathfrak m_{R_1}$.
This proves \eqref{i} for $q=1$.
In order to obtain the estimate in \eqref{i} for $q=2$ we multiply \eqref{eq:2605} by $\int_{\mt}|\nabla{\bfy}_{m}|^2\dx$ and obtain
 \begin{align*}
\tfrac{1}{4}&\|\nabla\bfy_m\|_{L^2_x}^4+\tfrac{1}{4}\big(\|\nabla\bfy_m\|_{L^2_x}^2-\|\nabla\bfy_{m-1}\|_{L^2_x}^2\big)^2\\&+\tfrac{1}{2}\|\nabla(\bfy_m-\bfy_{m-1})\|_{L^2_x}^2\|\nabla\bfy_m\|_{L^2_x}^2 +\tfrac{\mu}{2}\tau\|\nabla\bfy_m\|_{L^2_x}^2\|\mathcal A\bfy_m\|_{L^2_x}^2\\
&\leq  
\tfrac{1}{4}\|\nabla\bfy_{m-1}\|_{L^2_x}^4+c\,\tau\|\nabla\bfy_m\|_{L^2_x}^2\big(\|\Phi W(t_{m-1})\|_{W^{1,2}_x}^4+\|\bfu_{m-1}\|_{W^{1,2}_x}^4\big)\\&+c\,\tau\|\nabla\bfy_m\|_{L^2_x}^2\big(\|\Phi W(t_m)\|_{W^{1,2}_x}^4+\|\bfu_m\|_{W^{1,2}_x}^4\big)+c\tau\|\bfy_{m-1}\|_{L^2_x}^2\|\nabla\bfy_{m-1}\|_{L^2_x}^2\|\nabla\bfy_m\|_{L^2_x}^4\\
&\leq  
\tfrac{1}{4}\|\nabla\bfy_{m-1}\|_{L^2_x}^4+c\,\tau\big(\|\Phi W(t_{m-1})\|_{W^{1,2}_x}^6+\|\bfu_{m-1}\|_{W^{1,2}_x}^6+\|\Phi W(t_m)\|_{W^{1,2}_x}^6+\|\bfu_m\|_{W^{1,2}_x}^6\big)\\&+c\tau\|\bfy_{m-1}\|_{L^2_x}^2\|\nabla\bfy_{m-1}\|_{L^2_x}^2\|\nabla\bfy_m\|_{L^2_x}^4.
\end{align*}
We can control again the second last term by means of  \eqref{lem:3.1b} and the last one with the help of the discrete Gronwall lemma (and the definition of ${\mathfrak m}_{R_1}$).
We obtain \eqref{i} for $q=2$. We can prove similarly the claim for $q\in\N$ by iteration following the strategy of  \cite[Lemma 3.1]{BCP}.\\ 
{\bf 2.} Ad \eqref{ii}. We apply $\nabla$ to (\ref{tdiscr-add}) and multiply eventually by $\nabla\mathcal A\bfy_m$ which yields
\begin{align}\label{part_2}
\begin{aligned}\tfrac{1}{2} \Bigl( \Vert {\mathcal A} \bfy_m\Vert^2_{L^2_x}& -  \Vert {\mathcal A} \bfy_{m-1}\Vert^2_{L^2_x} + \Vert {\mathcal A} \bigl(\bfy_m - \bfy_{m-1}\bigr)\Vert^2_{L^2_x}\Bigr) + \tfrac{{\tau}}{ \mu}{2} \Vert \nabla {\mathcal A}\bfy_m\Vert^2_{L^2_x}\\ 
&
\leq \,c{\tau}\Big( \Vert \nabla {\mathcal A}[\Phi W(t_m)]\Vert^2_{L^2_x}
+{\tau} \Big\langle\nabla\bigl[(\nabla \bfy_m) \bfy_m\bigr], \nabla {\mathcal A} \bfy_m \Big\rangle_{L^2_x}\\&+{\tau} \Big\langle\nabla\bigl[\mathcal P\mathcal L^{m}(\bfy_{m-1},\bfy_m)\bigr], \nabla {\mathcal A} {\bfy}_m \Big\rangle_{L^2_x}\Big).
\end{aligned}
\end{align}
We deal independently with the terms on the right-hand side. First of all, we have for $\delta>0$
\begin{align*} \Big\langle \nabla \bigl[(\nabla \bfy_m)\bfy_{m-1}\bigr],& \nabla {\mathcal A} \bfy_m \Big\rangle_{L^2_x}\\&\leq c(\delta)\|\nabla \bigl[(\nabla \bfy_m)\bfy_{m-1}\bigr]\|_{L^2_x}^2+\delta\|\nabla\A\bfy_m\|_{L^{2}_x}^2\\
&\leq c(\delta)\|\nabla^2 \bfy_m\|_{L^2_x}^2\|\bfy_{m-1}\|_{L^\infty_x}^2+\|\nabla \bfy_m\|_{L^4_x}^2\|\nabla \bfy_{m-1}\|_{L^4_x}^2+\delta\|\nabla\A\bfy_m\|_{L^{2}_x}^2\\
&\leq c(\delta)\|\bfy_m\|_{W^{2,2}_x}^2\|\bfy_{m-1}\|_{W^{2,2}_x}^2+\delta\|\nabla\A\bfy_m\|_{L^{2}_x}^2.
\end{align*}
For the remaining term we use H\"older's inequality, Young's inequality and the interpolation $\Vert  {\mathcal A} \bfy_m\Vert^{3}_{L^2_x}\leq\Vert \nabla \bfy_m\Vert^{\frac{3}{2}}_{L^2_x} \Vert \nabla {\mathcal A} \bfy_m\Vert^{\frac{3}{2}}_{L^2_x}$
 to obtain
 \begin{align*} {\tau} \Big\langle\nabla &\bigl[(\nabla \bfy_m) [\Phi W(t_{m-1})]\bigr], \nabla{\mathcal A}\bfy_m\Big\rangle_{L^2_x} \\&\leq \frac{{\tau}\mu}{8} \Vert \nabla{\mathcal A} \bfy_m\Vert^2_{L^2_x} + c {\tau} \Bigl(\Vert \nabla \bfy_m \Vert^4_{L^2_x} + { \Vert \nabla [\Phi W(t_{m})]\Vert^4_{L^\infty_x}}\Bigr)  \\&\quad + c{\tau} \Bigl( \Vert{\mathcal A} \bfy_m\Vert^3_{L^2_x} + 
\Vert \Phi W(t_{m-1})\Vert^6_{L^\infty_x}\Bigr) \\
&\leq\frac{{\tau} \mu}{4} \Vert \nabla {\mathcal A} \bfy_m\Vert^2_{L^2_x} + c{\tau} \Bigl(1+\Vert \nabla \bfy_m \Vert^6_{L^2_x} + \Vert \Phi W(t_{m-1})\Vert^6_{W^{3,2}_x}\Bigr).\end{align*}
The last two terms can be controlled using \eqref{i} and $\Phi\in L_2(\mathfrak U;W^{3,2}(\mt;\R^2))$.
Similarly, we have
 \begin{align*} {\tau} \Big\langle\nabla &\bigl[(\nabla  [\Phi W(t_{m})])\bfy_{m-1}\bigr], \nabla{\mathcal A}\bfy_m\Big\rangle_{L^2_x} \\
&\leq\frac{{\tau} \mu}{4} \Vert \nabla {\mathcal A} \bfy_m\Vert^2_{L^2_x}
+ c \tau \Bigl(\Vert \nabla \bfy_{m-1} \Vert^4_{W^{1,2}_x} + \Vert \Phi W(t_{m})\Vert^4_{W^{3,2}_x}\Bigr)\end{align*}
and
 \begin{align*} \tau \Big\langle\nabla &\bigl[(\nabla  [\Phi W(t_{m})]) [\Phi W(t_{m-1})])\bigr], \nabla{\mathcal A}\bfy_m\Big\rangle_{L^2_x} \\
&\leq\frac{{\tau}\mu}{4} \Vert \nabla {\mathcal A} \bfy_m\Vert^2_{L^2_x} + c {\tau}\Vert \Phi W(t_{m-1})\Vert^4_{W^{3,2}_x}+ c {\tau}\Vert \Phi W(t_{m})\Vert^4_{W^{3,2}_x},\end{align*}
which can be controlled accordingly.
Absorbing the $\Vert \nabla {\mathcal A} \bfy_m\Vert^2$-terms and iterating and applying the discrete Gronwall lemma we conclude
$${\mathbb E}\biggl[ \max_{1\leq m\leq \mathfrak m_{R_1}} \Vert \bfy_m\Vert^{2}_{W^{2,2}_x}\biggr] 
+ \mu  {\mathbb E} \biggl[ \sum_{m=1}^{\mathfrak m_{R_1}} {\tau}\Vert \nabla {\mathcal A} {\bf y}_m\Vert^2_{L^2_x}\biggr]\leq \,ce^{cR_2(R_1)^4}\,{\mathbb E}\biggl[ \Vert {\bf u}_0\Vert^{2}_{W^{3,2}_x}+1\biggr], $$
using that
\begin{align*}
\sum_{n=1}^{\mathfrak m_{R_1}}{\tau}\|\bfy_{n-1}\|_{W^{2,2}_x}^2
\leq R_2(R_1)^2
\end{align*}
by the definition of ${\mathfrak m}_{R_1}$.
This proves \eqref{ii} for $q=1$.
The proof for $q\geq2$ follows by multiplying \eqref{part_2} with $\Vert {\mathcal A} \bfy_m\Vert^{2^q-2}$ ($q \geq 1$) and proceeding recursively (using also \eqref{i}).

{\bf 3.} Ad \eqref{iii}. 
We apply $\nabla$ and $\|\cdot\|_{L^2_x}^2$ to \eqref{tdiscr-add} such that
\begin{align*}
\bigg\Vert \nabla\frac{{\bf y}_{m} - \bfy_{m-1}}{\tau}\bigg\Vert_{L^2_x}^2 &\leq\,c\,\big\Vert \nabla{\mathcal A} \bfy_m\big\Vert_{L^2_x}^2 +c\,\big\Vert\nabla{\mathcal A}[\Phi W(t_m)]\Vert_{L^2_x}^2 +c\,\big\Vert \nabla {\mathcal P}\bigl[(\nabla {\bf y}_{m}){\bf y}_{m-1} \bigr]\big\Vert_{L^2_x}^2\\&\quad +c\,\big\Vert\nabla{\mathcal P}\bigl[ \mathcal L^{m}(\bfy_{m-1},\bfy_m)\bigr]\big\Vert_{L^2_x}^2\\
&\leq\,c\,\big\Vert  \bfy_m\big\Vert_{W^{3,2}_x}^2 +c\,\big\Vert\Phi W(t_m)\Vert_{W^{3,2}_x}^2 +c\,\big\Vert  \bfy_{m-1}\big\Vert_{W^{2,2}_x}^4+c\,\big\Vert  \bfy_m\big\Vert_{W^{2,2}_x}^4\\&+c\,\big\Vert  \Phi W(t_{m-1})\big\Vert_{W^{2,2}_x}^4+c\,\big\Vert  \Phi W(t_m)\big\Vert_{W^{2,2}_x}^4,
\end{align*}
where we used Ladyshenskaya's inequality.
Multiplying by ${\tau}\|\bfy_m\|^{2^q-4}_{W^{2,2}_x}$ and summing with respect to $m$ yields 
{\rm (iii)} due to {\rm (ii)} and $\Phi\in L_2(\mathfrak U;W^{3,2}(\mt;\R^2))$. We can again iterate the argument for $q\geq3$.
\end{proof}
\begin{remark}
It is crucial that the diffusion coefficient in \eqref{eq:SNSy} is solenoidal in order to perform the integration by parts at the beginning of the proof of Lemma \ref{lem-rand1}. In the general case we must define $\bfy$ by $\bfy(t)=\bfu(t)-\mathcal P\Phi W(t)$ with the Helmholz-projection $\mathcal P$. Even if $\Phi$ vanishes
on $\partial\mathcal O$, we do not necessarily have the same for $\mathcal P\Phi W(t)$ (and $\bfy$).
\end{remark}

\subsection{Temporal Error analysis}
For the purpose of the error analysis
we define for $R>0$ and $m\in\{1,\dots,M\}$
\begin{align}\label{eq:tmR}
\tilde{\mathfrak t}_{m}^R:=t_m\wedge \tilde{\mathfrak t}_{R}^{\tt d},\quad \tilde\tau_{n}^{R}&=\tilde{\mathfrak t}_{n}^{R}-\tilde{\mathfrak t}_{n-1}^{R}.
\end{align} 
We consider the stopped process
\begin{align}\label{tdiscrtilde}
\tilde{\bfy}_m^{R}:=\begin{cases} \bfy_m,\quad \text{in}\quad \{t_m=\tilde{\mathfrak t}_m^{R}\},\\
\bfy_{{\mathfrak m}_{R}},\quad \text{in}\quad \{t_m> \tilde{\mathfrak{t}}_m^{R}\},
\end{cases}
\end{align}
where $\mathfrak m_{R}$ is defined before \eqref{eq:tRd}. The main effort of this section is to prove the following error estimate.
\begin{theorem}\label{thm:3.1tilde}
Assume that $\bfu_0\in L^r(\Omega,W^{3,2}(\mt;\R^2))\cap L^{5r}(\Omega,W^{1,2}_{0,\Div}(\mt;\R^2))$ for some $r\geq8$ and that $\Phi\in L_2(\mathfrak U;W^{1,2}_{0,\Div}\cap W^{3,2}(\mt;\R^2))$. Let $\bfy$ be the solution
to \eqref{eq:SNSy}, and $(\bfy_m)_{m=1}^M$ be the solution to \eqref{tdiscr-add}. Then we have the error estimate
\begin{align}\label{eq:thm:4}
\begin{aligned}
\max_{1\leq m\leq M}\E\bigg[\|\bfy(\tilde{\mathfrak{t}}^R_m)-\tilde\bfy^{R}_{m}\|^2_{L^2_x}&+\sum_{n=1}^m \tilde\tau_n^R \|\nabla\bfy(\tilde{\mathfrak{t}}^R_n)-\nabla\tilde\bfy^R_{n}\|^2_{L^2_x}\bigg]\leq \,c\,\tau^{2}e^{cR_2(R)^4}.
\end{aligned}
\end{align}
\end{theorem}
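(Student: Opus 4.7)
The plan is to derive a discrete error equation for $\mathbf{e}_m := \bfy(\tilde{\mathfrak t}_m^R) - \tilde{\bfy}_m^R$ by subtracting \eqref{tdiscr-add} from the time-integrated form of \eqref{eq:SNSy}$_1$ on $[t_{m-1},t_m]$, test it against $\mathbf{e}_m$, and close the bound via a discrete Gronwall step that leverages the stability estimates of Lemma \ref{lem-rand1}. Since for $m>\mathfrak m$ both $\tilde{\mathfrak t}_m^R$ and $\tilde{\bfy}_m^R$ are frozen at $\tilde{\mathfrak t}_{\mathfrak m}^R$ and $\bfy_{\mathfrak m}$, the analysis reduces to $m\leq \mathfrak m$, where $\tilde{\mathfrak t}_m^R=t_m$, $\tilde\tau_n^R=\tau$ and $\tilde{\bfy}_m^R=\bfy_m$. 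After rearrangement, the error equation reads schematically
\begin{align*}
\mathbf{e}_m-\mathbf{e}_{m-1}-\mu\tau\mathcal{A}\mathbf{e}_m+\tau\mathcal{P}\bigl[(\nabla\mathbf{e}_m)\bfy(t_{m-1})+(\nabla\bfy(t_m))\mathbf{e}_{m-1}-(\nabla\mathbf{e}_m)\mathbf{e}_{m-1}\bigr]=\mathcal{R}_m^{\mathrm v}+\mathcal{R}_m^{\mathrm c}+\mathcal{R}_m^{\Phi}+\mathcal{R}_m^{\mathcal L},
\end{align*}
the remainders $\mathcal R_m^{\cdot}$ collecting the time-quadrature errors from the viscous, convective, deterministic forcing and $\mathcal{L}^W$ contributions respectively.

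Testing against $\mathbf{e}_m$ in $L^2_x$, applying the polarization identity on $\mathbf{e}_m-\mathbf{e}_{m-1}$, and using $\diver\bfy(t_{m-1})=0$ to cancel $\int(\nabla\mathbf{e}_m)\bfy(t_{m-1})\cdot\mathbf{e}_m\dx$, the remaining nonlinear test terms are controlled by Ladyzhenskaya's inequality: the $(\nabla\bfy(t_m))\mathbf{e}_{m-1}\cdot\mathbf{e}_m$-contribution gives a Gronwall mass $c\tau\|\bfy(t_m)\|_{W^{2,2}_x}^2\|\mathbf{e}_{m-1}\|_{L^2_x}^2$ with an absorbable piece $\delta\tau\|\nabla\mathbf{e}_m\|_{L^2_x}^2$, and the cubic-in-$\mathbf{e}$ term $(\nabla\mathbf{e}_m)\mathbf{e}_{m-1}\cdot\mathbf{e}_m$ is handled similarly after interpolating $\mathbf{e}$ in $L^4$ via $\|\mathbf{e}\|_{L^4}^2\leq\|\mathbf{e}\|_{L^2}\|\nabla\mathbf{e}\|_{L^2}$. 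Concerning the quadrature remainders: $\mathcal{R}_m^{\mathrm v}=\mu\int_{t_{m-1}}^{t_m}\mathcal{A}(\bfy(s)-\bfy(t_m))\,ds$ yields $O(\tau^2)$ after summation using $\partial_t\nabla\bfy\in L^2_tL^2_x$ from Corollary \ref{cor:add}; the convective quadrature splits as $(\nabla\bfy(t_m)-\nabla\bfy(s))\bfy(s)+(\nabla\bfy(t_m))(\bfy(s)-\bfy(t_{m-1}))$ and is absorbed via the time-regularity of $\bfy$ together with Ladyzhenskaya and the stopping-time bound on $\|\bfy\|_{W^{2,2}_x}$; $\mathcal{R}_m^\Phi=\mu\int_{t_{m-1}}^{t_m}\mathcal{A}[\Phi W(s)-\Phi W(t_m)]\,ds$ satisfies $\E[\|\mathcal{R}_m^\Phi\|_{L^2_x}^2]\leq c\tau^3$ by It\^o isometry and $\Phi\in L_2(\mathfrak U;W^{2,2})$, totalling $O(\tau^2)$ after testing.

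The $\mathcal{L}^W$-remainder is the delicate one and the source of the $\tau^\varepsilon$-loss. After a splitting that isolates increments of $\bfy$ (controlled again by $\int\|\partial_t\bfy\|^2$) and It\^o-type increments of $\Phi W$ (treated as in $\mathcal{R}_m^\Phi$), there remains a purely noise-noise piece $\int_{t_{m-1}}^{t_m}(\nabla[\Phi W(t_m)-\Phi W(s)])[\Phi W(t_{m-1})]\,ds$ for which the It\^o isometry does not directly apply; it is estimated via Kolmogorov's continuity theorem, which grants $\|\Phi W(t)-\Phi W(s)\|_{W^{2,2}_x}\leq C(\omega)|t-s|^{1/2-\varepsilon/2}$ with $C(\omega)$ of every finite moment, and combined with the stopping-time bound $\|\Phi W\|_{L^\infty_t W^{2,2}_x}\leq K(R)$ this produces a per-step $L^2_x$-squared bound of order $\tau^{3-\varepsilon}$, hence $\tau^{2-\varepsilon}$ after summation. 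A discrete Gronwall argument then closes the estimate, the factor $e^{cK(R)^2}$ emerging from $\exp\bigl(c\tau\sum_m\|\bfy(t_m)\|_{W^{2,2}_x}^2\bigr)\leq e^{cK(R)^2}$ on $\{m\leq\mathfrak m\}$; maximising over $m$ exploits Burkholder--Davis--Gundy on the martingale pieces of the remainders. The chief technical obstacle is precisely this splitting of $\mathcal{R}_m^{\mathcal L}$, which has to be carried out so that the pathwise H\"older-$(1/2-\varepsilon/2)$ regularity of $\Phi W$ intervenes only in the single, unavoidable noise-noise term responsible for the $\varepsilon$-loss in the rate.
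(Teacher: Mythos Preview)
Your overall architecture---derive the error identity, test with $\mathbf{e}_m$, absorb nonlinear terms via Ladyzhenskaya, close by discrete Gronwall---matches the paper, and you correctly locate the $\varepsilon$-loss in the noise--noise part of $\mathcal{R}_m^{\mathcal L}$. But there is a genuine gap in how you handle the stochastic forcing remainder $\mathcal{R}_m^{\Phi}$, and the same gap propagates into your sketch for $\mathcal{R}_m^{\mathcal L}$.

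You assert that $\E\bigl[\|\mathcal{R}_m^{\Phi}\|_{L^2_x}^2\bigr]\leq c\tau^3$ and that this ``totals $O(\tau^2)$ after testing.'' The $L^2$-bound is correct, but it does \emph{not} by itself yield a summed contribution of order $\tau^2$: after testing with $\mathbf{e}_m$ and applying Young's inequality with a weight $\tau$ (needed so that $\tau\|\mathbf{e}_m\|_{L^2_x}^2$ feeds into Gronwall), the remainder term becomes $\tau^{-1}\|\mathcal{R}_m^{\Phi}\|_{L^2_x}^2$, whose expectation is $c\tau^2$ \emph{per step}; summing over $M=T/\tau$ steps gives only $O(\tau)$, i.e.\ rate $1/2$. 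Integrating by parts and pairing with $\nabla\mathbf{e}_m$ runs into the same accounting. The missing device---which the paper uses repeatedly---is the splitting $\mathbf{e}_m=\mathbf{e}_{m-1}+(\mathbf{e}_m-\mathbf{e}_{m-1})$: since $\int_{t_{m-1}}^{t_m}(W_s-W_{t_m})\,ds$ is independent of $\mathfrak F_{t_{m-1}}$ with mean zero, the pairing $\E\bigl[\langle\mathcal{R}_m^{\Phi},\mathbf{e}_{m-1}\rangle\bigr]$ vanishes, and the remaining pairing with $\mathbf{e}_m-\mathbf{e}_{m-1}$ is absorbed by the numerical dissipation $\tfrac12\|\mathbf{e}_m-\mathbf{e}_{m-1}\|_{L^2_x}^2$ on the left, leaving only $\sum_m\E\|\mathcal{R}_m^{\Phi}\|^2\leq c\tau^2$. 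Your mention of Burkholder--Davis--Gundy ``for the max'' is a separate matter and does not supply this per-step cancellation; note also that the theorem has the maximum \emph{outside} the expectation, so BDG is not invoked in the paper's proof at all.

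The same splitting is what drives the paper's treatment of the $\mathcal{L}^W$-remainders: each of the pieces involving a raw noise increment paired with $\mathbf{e}_m$ is decomposed so that a subterm pairs with $\mathbf{e}_{m-1}$ (zero expectation as a stopped martingale increment) and the rest pairs with $\mathbf{e}_m-\mathbf{e}_{m-1}$ (absorbed by dissipation). Without making this systematic, your $\mathcal{R}_m^{\mathcal L}$ argument inherits the same loss of one power of $\tau$. Two smaller remarks: the cubic term $\int(\nabla\mathbf{e}_m)\mathbf{e}_{m-1}\cdot\mathbf{e}_m\,dx$ actually vanishes by antisymmetry since $\diver\mathbf{e}_{m-1}=0$, so no estimate is needed there; and your Gronwall coefficient $\tau\sum_m\|\bfy(t_m)\|_{W^{2,2}_x}^2$ is not directly controlled by the definition of $\tilde{\mathfrak t}_R^{\tt d}$---the paper instead uses the pointwise bound $\|\nabla\tilde{\bfy}_m^R\|_{L^2_x}\leq K(R)$ from the stopping time, which gives a constant Gronwall coefficient $K(R)^2$ and hence the stated factor $e^{cK(R)^2}$.
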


The convergence in probability of the original scheme \eqref{tdiscr-add} is now a direct consequence of Theorem \ref{thm:3.1tilde}: Defining $R$ by $R_2(R)=c^{-1/4}\sqrt[4]{-\varepsilon \log \tau}$, where $\varepsilon>0$ is arbitrary, we have for any $\xi>0,\alpha<1$
\begin{align*}
&\max_{1\leq m\leq M}\mathbb P\bigg[\|\bfy(t_m)-\bfy_{m}\|_{L^2_x}^2+\sum_{n=1}^m \tau\|\nabla\bfy(t_n)-\nabla\bfy_{n}\|_{L^2_x}^2>\xi\,\tau^{2\alpha-2\varepsilon}\bigg]\\
&\leq \max_{1\leq m\leq M}\mathbb P\bigg[\|\bfy(\tilde{\mathfrak t}^R_{m})-\tilde\bfy_{m}^R\|_{L^2_x}^2+\sum_{n=1}^m \tilde\tau_{n}^R\|\nabla\bfy(\tilde{\mathfrak t}_{n}^R)-\nabla\tilde\bfy_{n}^R\|_{L^2_x}^2>\xi\,\tau^{2\alpha-2\varepsilon}\bigg]\\
&+\mathbb P\big[\{\tilde{\mathfrak t}_M^R<T\}\big]\rightarrow 0
\end{align*}
as $\tau\rightarrow0$ by \eqref{eq:2008} (recall that $\mathfrak t_R\rightarrow \infty$ $\p$-a.s. by Theorem \ref{thm:inc2d}).
Relabeling $\alpha$ we have proved the following result.

\begin{theorem}\label{thm:maintdiscr}
Assume that $\bfu_0\in L^r(\Omega,W^{3,2}(\mt;\R^2))\cap L^{5r}(\Omega,W^{1,2}_{0,\Div}(\mt;\R^2))$ for some $r\geq8$ and that $\Phi\in L_2(\mathfrak U;W^{1,2}_{0,\Div}\cap W^{3,2}(\mt;\R^2))$. Let $$(\bfu,(\mathfrak{t}_R)_{R\in\N},\mathfrak{t})$$ be the unique global maximal strong solution to \eqref{eq:SNS} from Theorem \ref{thm:inc2dmax}.
Then we have for any $\xi>0$, $\alpha<1$ 
\begin{align*}
&\max_{1\leq m\leq M}\mathbb P\bigg[\|\bfy(t_m)-\bfy_{m}\|_{L^2_x}^2+\sum_{n=1}^m \tau\|\nabla\bfy(t_n)-\nabla\bfy_{n}\|_{L^2_x}^2>\xi\,\tau^{2\alpha}\bigg]\rightarrow 0
\end{align*}
as $\tau\rightarrow0$,
where $\bfy$ is the solution
to \eqref{eq:SNSy} and $(\bfy_{m})_{m=1}^M$ is the solution to \eqref{tdiscr-add}.
\end{theorem}

\begin{proof}[Proof of Theorem \ref{thm:3.1tilde}]
We integrate (\ref{eq:SNSy}) over $(\tilde{\mathfrak{t}}_{m-1}^R, \tilde{\mathfrak{t}}_m^R)$, and subtract this equation from (\ref{tdiscr-add}) (multiplied by $\tilde{\tau}_{m}^R$). Then ${\bf e}_\ell := {\bf y}(\tilde{\mathfrak t}^R_\ell) - \tilde{\bf y}^R_\ell$ (for $\ell \in \{ m-1,m\}$) solves
\begin{eqnarray}\nonumber\langle{\bf e}_m - {\bf e}_{m-1} - \mu \tilde{\tau}_{m}^R  {\mathcal A} {\bf e}_m,\bfphi \rangle_{L^2_x}
&=& 
\bigg\langle\mu \int_{\tilde{\mathfrak{t}}_{m-1}^R}^{\tilde{\mathfrak{t}}_{m}^R} \int_{s}^{\tilde{\mathfrak{t}}_{m}^R} \partial_t\nabla {\bf y}(\xi)\, {\rm d}\xi{\rm d}s ,\nabla\bfphi\bigg\rangle_{L^2_x} \\ 
\label{erro1}
&& + \bigg\langle\mu \int_{\tilde{\mathfrak{t}}_{m-1}^R}^{\tilde{\mathfrak{t}}_m^R}{\mathcal A} [\Phi (W_s - W_{\tilde{\mathfrak{t}}_m^R})]\, \dd s,\bfphi\bigg\rangle + \langle{\mathrm{NLT}}_m,\bfphi\rangle_{L^2_x}
\end{eqnarray}
for all $\bfphi\in W^{1,2}_{0,\Div}(\mt;\R^2)$,
where
\begin{align*}
{\mathrm{NLT}}_m &= \int_{\tilde{\mathfrak{t}}_{m-1}^R}^{\tilde{\mathfrak{t}}_m^R} \Bigl({\mathcal P} \Bigl[(\nabla {\bf y}){\bf y} + \mathcal L^{W}(\bfy)\Bigr] - {\mathcal P}\Bigl[ (\nabla \tilde{\bfy}_{m}^R)\tilde{\bfy}_{m-1}^R +
\mathcal L^{m}(\tilde{\bfy}_{m-1}^R,\tilde{\bfy}_{m}^R)\Bigr]\Bigr)\, {\rm d}s  \\
&=\int_{\tilde{\mathfrak{t}}_{m-1}^R}^{\tilde{\mathfrak{t}}_m^R} {\mathcal P} \Bigl[(\nabla {\bf y}){\bf y}- (\nabla \tilde{\bfy}_{m}^R)\tilde{\bfy}_{m-1}^R\Bigr]\, {\rm d}s +  \int_{\tilde{\mathfrak{t}}_{m-1}^R}^{\tilde{\mathfrak{t}}_m^R}{\mathcal P} \Bigl[\nabla\big[\Phi W_s]\bfy(s)-\nabla\big[\Phi W_{\tilde{\mathfrak{t}}_{m}^R}]\tilde{\bfy}_{m-1}^R\Bigr]\,\dd s\\
&+  \int_{\tilde{\mathfrak{t}}_{m-1}^R}^{\tilde{\mathfrak{t}}_m^R}{\mathcal P}\Bigl[(\nabla\bfy(s))\big[\Phi W_s\big]-\nabla\tilde{\bfy}_{m}^R\big[\Phi W_{\tilde{\mathfrak{t}}_{m-1}^R}]\Bigr]\, {\rm d}s\\&+  \int_{\tilde{\mathfrak{t}}_{m-1}^R}^{\tilde{\mathfrak{t}}_m^R}{\mathcal P}\Bigl[\nabla\big[\Phi W_s]\big[\Phi W_s]-\nabla\big[\Phi W_{\tilde{\mathfrak{t}}_{m}^R}]\big[\Phi W_{\tilde{\mathfrak{t}}_{m-1}^R}]\Bigr]\, {\rm d}s\\
&=:{\mathrm{NLT}}_m^1+{\mathrm{NLT}}_m^2+{\mathrm{NLT}}_m^3+{\mathrm{NLT}}_m^4.
\end{align*}
We test (\ref{erro1}) with ${\bf e}_m$ and apply expectations. The left-hand side then is
\begin{equation}\label{erro-1}\tfrac{1}{2} {\mathbb E}\bigl[ \big(\Vert {\bf e}_m\Vert^2_{L^2_x} - \Vert {\bf e}_{m-1}\Vert^2 _{L^2_x}+ \Vert {\bf e}_m - {\bf e}_{m-1}\Vert^2_{L^2_x}\big)\bigr] + \mu {\mathbb E}\bigl[ \tilde{\tau}_{m}^R\Vert \nabla {\bf e}_m\Vert^2_{L^2_x}\bigr] \, .
\end{equation}
The first term on the right-hand side of (\ref{erro1}) may be bounded by
\begin{eqnarray}\label{erro-2}
&&\delta{\mathbb E}\bigl[\tilde{\tau}_{m}^R  \Vert \nabla {\bf e}_m\Vert^2_{L^2_x} \bigr] +c(\delta) {\mathbb E}\Bigl[ \tilde{\tau}_{m}^R\Bigl(\int_{\tilde{\mathfrak{t}}_{m-1}^R}^{\tilde{\mathfrak{t}}_m^R} \Vert \nabla \partial_t {\bf y}(\xi)\Vert_{L^2_x}\, {\rm d}\xi\Bigr)^2\Bigr]\\ \nonumber
&&\qquad \leq \delta{\mathbb E}\bigl[\tilde{\tau}_{m}^R\Vert \nabla {\bf e}_m\Vert^2_{L^2_x} \bigr] +c(\delta){\tau}^2 {{\mathbb E}\biggl[ \int_{\tilde{\mathfrak{t}}_{m-1}^R}^{\tilde{\mathfrak{t}}_m^R} \Vert \nabla \partial_t {\bf y}\Vert^2_{L^2_x}\, {\rm d}\xi\biggr]},
\end{eqnarray}
where $\delta>0$ is arbitrary.
For the second term on the right-hand side of (\ref{erro1}), we use independence of increments of the  Wiener process to conclude that it equals 
\begin{eqnarray}\label{erro-3}
&&{\mathbb E}\Bigl[\mu \bigg\langle\int_{\tilde{\mathfrak{t}}_{m-1}^R}^{\tilde{\mathfrak{t}}_m^R}{\mathcal A} [\Phi (W_s - W_{\tilde{\mathfrak{t}}_m^R})]\, {\rm d}s, {\bf e}_m - {\bf e}_{m-1}\bigg\rangle_{L^2_x}\Bigr] \\ \nonumber
&&\qquad \leq \frac{1}{2} {\mathbb E}\bigl[\Vert {\bf e}_m - {\bf e}_{m-1}\Vert^2_{L^2_x}\bigr] + c\,{\mathbb E}\bigg[\bigg\| \int_{\tilde{\mathfrak{t}}_{m-1}^R}^{\tilde{\mathfrak{t}}_m^R} {\mathcal A} [\Phi (W_s - W_{\tilde{\mathfrak{t}}_m^R})]\, {\rm d}s\bigg\|_{L^2_x}^2\bigg],
\end{eqnarray}
where the last term is bounded by
$$ {\tau}^2 {\mathbb E}\bigg[\sup_{s \in [\tilde{\mathfrak{t}}_{m-1}^R,\tilde{\mathfrak{t}}_m^R]}\Vert \Phi(W_s - W_{\tilde{\mathfrak{t}}_m^R})\Vert^2_{W^{2,2}_x}\bigg]\leq {\tau}^2 {\mathbb E}\bigg[\sup_{s \in [t_{m-1},t_m]}\Vert \Phi(W_s - W_{t_m})\Vert^2_{W^{2,2}_x}\bigg]\leq c \tau^3$$
due to Doob's maximum inequality and $\Phi\in L_2(\mathfrak U;W^{2,2}(\mt;\R^2))$. Now we consider the errors due to nonlinear effects that are contained in ${\mathrm NLT}_m$: we start with the ``quadratic terms"
\begin{align*}
{\mathrm{NLT}}^1_m &= \int_{\tilde{\mathfrak{t}}_{m-1}^R}^{\tilde{\mathfrak{t}}_m^R} {\mathcal P}\Bigl[ \bigl(\nabla {\bf y}(s)\bigr) {\bf y}(s) -
\bigl(\nabla {\bf y}(\tilde{\mathfrak{t}}_m^R)\bigr) {\bf y}(\tilde{\mathfrak{t}}_m^R) + \bigl(\nabla {\bf y}(\tilde{\mathfrak{t}}_m^R)\bigr) {\bf y}(\tilde{\mathfrak{t}}_m^R) - \bigl( \nabla \tilde{\bf y}^R_m\bigr)\tilde{\bfy}_{m-1}^R\Bigr]\, {\rm d}s \\
&= \int_{\tilde{\mathfrak{t}}_{m-1}^R}^{\tilde{\mathfrak{t}}_m^R} {\mathcal P}\Bigl[ \bigl(\nabla [{\bf y}(s)-{\bf y}(\tilde{\mathfrak{t}}_m^R)]\bigr) {\bf y}(s) + \bigl(\nabla {\bf e}_m\bigr){\bf y}(\tilde{\mathfrak{t}}_m^R) + \bigl( \nabla \tilde{\bf y}^R_m\bigr){\bf e}_m\Bigr]\, {\rm d}s\\
&+\int_{\tilde{\mathfrak{t}}_{m-1}^R}^{\tilde{\mathfrak{t}}_m^R} {\mathcal P}\Bigl[\bigl( \nabla \tilde{\bf y}^R_m\bigr)(\tilde{\bf y}^R_m-\tilde\bfy^R_{m-1})+\nabla \bfy(\tilde{\mathfrak{t}}_m^R)(\bfy(s)-\bfy(\tilde{\mathfrak{t}}_m^R))\Bigr]\, {\rm d}s\\
& =: \sum_{i=1}^5 {\mathrm{NLT}}^{1,i}_m\, .
\end{align*}
Note that $\langle{\mathrm{NLT}}^{1,2}_m,\bfe_m\rangle_{L^2_x}=0$ such that we only have to consider
${\mathrm{NLT}}^{1,1}_m$, ${\mathrm{NLT}}^{1,3}_m$, ${\mathrm{NLT}}^{1,4}_m$ and ${\mathrm{NLT}}^{1,5}_m$.
Thanks to ${\bf y}(s) - {\bf y}(\tilde{\mathfrak{t}}_m^R) = \int_{\tilde{\mathfrak{t}}_m^R}^s \partial_t {\bf y}(\xi)\, {\rm d}\xi$, we re-write $\langle{\mathrm{NLT}}^{1,1}_m, {\bf e}_m\rangle_{L^2_x}$ as
\begin{align*}
\bigg\langle &\int_{\tilde{\mathfrak{t}}_{m-1}^R}^{\tilde{\mathfrak{t}}_m^R} \Bigl(\int_{\tilde{\mathfrak{t}}_m^R}^s \nabla \partial_t {\bf y}(\xi)\, {\rm d}\xi\Bigr) {\bf y}(s)\, {\rm d}s, {\bf e}_m  \bigg\rangle_{L^2_x}\\& \leq \tilde{\tau}_{m}^R \sup_{\tilde{\mathfrak{t}}_{m-1}^R \leq s \leq \tilde{\mathfrak{t}}_m^R}\biggl(\bigg\Vert\int_s^{\tilde{\mathfrak{t}}_m^R} \nabla \partial_t {\bf y}(\xi)\, {\rm d}\xi\bigg\Vert_{L^2_x} \Vert {\bf y}(s)\Vert_{L^4_x}\biggr) \Vert {\bf e}_m\Vert_{L^4_x} \\
& \leq \delta\tilde{\tau}_{m}^R \Vert \nabla {\bf e}_m\Vert^2_{L^2_x} +  c(\delta) {\tau}^2  \int_{\tilde{\mathfrak{t}}_{m-1}^R}^{\tilde{\mathfrak{t}}_m^R} \Vert \nabla \partial_t {\bf y}(s)\Vert^2_{L^2_x}\, {\rm d}s \sup_{s \in [\tilde{\mathfrak{t}}_{m-1}^R,\tilde{\mathfrak{t}}_m^R]}\Vert \nabla{\bf y}(s)\Vert^2_{L^2_x}\\
& \leq \delta\tilde{\tau}_{m}^R \Vert \nabla {\bf e}_m\Vert^2_{L^2_x} +  c(\delta) {\tau}^2 R_2(R)^2 \int_{\tilde{\mathfrak{t}}_{m-1}^R}^{\tilde{\mathfrak{t}}_m^R} \Vert \nabla \partial_t {\bf y}(s)\Vert^2_{L^2_x}\, {\rm d}s
\end{align*}
for $\delta>0$
using Corollary \ref{cor:add} and recalling that $\tilde{\mathfrak t}_m^R\leq\mathfrak t_R$.
Moreover, we have
\begin{align*}
\langle{\mathrm{NLT}}^{1,3}_m, {\bf e}_m\rangle_{L^2_x}& \leq \tilde{\tau}_{m}^R \Vert {\bf e}_m\Vert^2_{L^4_x}  \Vert \nabla\tilde{\bf y}^R_m\Vert_{L^2_x}\leq \,c\tilde{\tau}_{m}^R \Vert {\bf e}_m\Vert_{L^2_x}  \Vert \nabla{\bf e}_m\Vert_{L^2_x}  \Vert \nabla\tilde{\bf y}^R_m\Vert_{L^2_x}\\
& \leq \delta\tilde{\tau}_{m}^R \Vert \nabla {\bf e}_m\Vert^2_{L^2_x} + 
c(\delta) \tilde{\tau}_{m}^R R_2(R)^2 \|\bfe_m\|_{L^2_x}^2
\end{align*}
by definition of $\tilde{\mathfrak t}_m^R$.
Arguing similarly as for the estimates for ${\mathrm{NLT}}^{1,1}_m$ above and using Sobolev's embedding $W_0^{1,2}(\mt)\hookrightarrow L^4(\mt)$ we have
\begin{align*}
\langle{\mathrm{NLT}}^{1,5}_m, {\bf e}_m\rangle_{L^2_x}&=\bigg\langle \int_{\tilde{\mathfrak{t}}_{m-1}^R}^{\tilde{\mathfrak{t}}_m^R} \nabla{\bf y}(\tilde{\mathfrak t}_m^R)\Bigl(\int_{\tilde{\mathfrak{t}}_m^R}^s \partial_t {\bf y}(\xi)\, {\rm d}\xi\Bigr) \, {\rm d}s, {\bf e}_m  \bigg\rangle_{L^2_x}\\& \leq \tilde{\tau}_{m}^R \sup_{\tilde{\mathfrak{t}}_{m-1}^R \leq s \leq \tilde{\mathfrak{t}}_m^R}\biggl(\bigg\Vert\int_s^{\tilde{\mathfrak{t}}_m^R}  \partial_t {\bf y}(\xi)\, {\rm d}\xi\bigg\Vert_{L^4_x} \Vert \nabla{\bf y}(s)\Vert_{L^2_x}\biggr) \Vert {\bf e}_m\Vert_{L^4_x} \\
& \leq \delta\tilde{\tau}_{m}^R \Vert \nabla {\bf e}_m\Vert^2_{L^2_x} +  c(\delta) {\tau}^2  \int_{\tilde{\mathfrak{t}}_{m-1}^R}^{\tilde{\mathfrak{t}}_m^R} \Vert \nabla \partial_t {\bf y}(s)\Vert^2_{L^2_x}\, {\rm d}s \sup_{s \in [\tilde{\mathfrak{t}}_{m-1}^R,\tilde{\mathfrak{t}}_m^R]}\Vert \nabla{\bf y}(s)\Vert^2_{L^2_x}
\end{align*}
such that we obtain the same estimate for its expectation.
For ${\mathrm{NLT}}^{1,4}_m$ we employ a discrete version of this argument to obtain
\begin{align*}
\langle{\mathrm{NLT}}^{1,4}_m, {\bf e}_m\rangle_{L^2_x}& \leq \tilde{\tau}_{m}^R \big\Vert\tilde{\bf y}^R_m-\tilde\bfy^R_{m-1}\big\Vert_{L^4_x} \Vert \nabla\tilde{\bf y}^R_m\Vert_{L^2_x} \Vert {\bf e}_m\Vert_{L^4_x} \\
& \leq \delta\tilde{\tau}_{m}^R \Vert \nabla {\bf e}_m\Vert^2_{L^2_x} +  c(\delta) {\tau}^3   \Big\Vert \frac{\nabla(\tilde{\bf y}^R_m-\tilde\bfy^R_{m-1})}{\tau}\Big\Vert^2_{L^2_x}\Vert \nabla\tilde{\bf y}^R_m\Vert^2_{L^2_x}.
\end{align*}
Note that the last term can be controlled by means of
Lemma \ref{lem-rand1} (iii) giving the bound $c(\delta)\tau^2e^{cR_2(R)^4}$ for its expectation (in summed form).

For the second term in $\mathrm{NLT}$ we write
\begin{align*}
\langle {\mathrm{NLT}}^{2}_m,\bfe_{m}\rangle_{L^2_x}&=\bigg\langle\int_{\tilde{\mathfrak{t}}_{m-1}^R}^{\tilde{\mathfrak{t}}_m^R}\big(\nabla\big[\Phi W_s]\bfy(s)-\nabla\big[\Phi W_{\tilde{\mathfrak{t}}_{m}^R}]\tilde{\bfy}_{m-1}^R\big)\,\dd s,\bfe_m\bigg\rangle_{L^2_x}\\
&=\bigg\langle\int_{\tilde{\mathfrak{t}}_{m-1}^R}^{\tilde{\mathfrak{t}}_m^R}\nabla\big[\Phi (W_s-W_{\tilde{\mathfrak{t}}_{m}^R})]\bfy(s)\,\dd s,\bfe_m\bigg\rangle_{L^2_x}\\&+\bigg\langle\int_{\tilde{\mathfrak{t}}_{m-1}^R}^{\tilde{\mathfrak{t}}_m^R}\nabla\big[\Phi W_{\tilde{\mathfrak{t}}_{m-1}^R}]\bfe_{m-1}\,\dd s,\bfe_m\bigg\rangle_{L^2_x}\\
&+\bigg\langle\int_{\tilde{\mathfrak{t}}_{m-1}^R}^{\tilde{\mathfrak{t}}_m^R}\nabla\big[\Phi (W_{\tilde{\mathfrak{t}}_m^R}-W_{\tilde{\mathfrak{t}}_{m-1}^R})]\bfe_{m-1}\,\dd s,\bfe_m\bigg\rangle_{L^2_x}\\&+\bigg\langle\int_{\tilde{\mathfrak{t}}_{m-1}^R}^{\tilde{\mathfrak{t}}_m^R}\nabla\big[\Phi W_{\tilde{\mathfrak{t}}_m^R}](\bfy(s)-\bfy(\tilde{\mathfrak{t}}_{m-1}^R)\big)\,\dd s,\bfe_m\bigg\rangle_{L^2_x}\\
&=:{\mathrm{E}}^{2,1}_m+{\mathrm{E}}^{2,2}_m+{\mathrm{E}}^{2,3}_m+{\mathrm{E}}^{2,4}_m.
\end{align*}
For the first term we split further
\begin{align*}
{\mathrm{E}}^{2,1}_m&=\bigg\langle\int_{\tilde{\mathfrak{t}}_{m-1}^R}^{\tilde{\mathfrak{t}}_m^R}\nabla\big[\Phi (W_s-W_{\tilde{\mathfrak{t}}_m^R})]\bfy(s)\,\dd s,\bfe_m-\bfe_{m-1}\bigg\rangle_{L^2_x}\\
&+\bigg\langle\int_{\tilde{\mathfrak{t}}_{m-1}^R}^{\tilde{\mathfrak{t}}_m^R}\nabla\big[\Phi (W_s-W_{\tilde{\mathfrak{t}}_m^R})]\big(\bfy(s)-\bfy(\tilde{\mathfrak t}^R_{m-1})\big)\,\dd s,\bfe_{m-1}\bigg\rangle_{L^2_x}\\
&+\bigg\langle\int_{\tilde{\mathfrak{t}}_{m-1}^R}^{\tilde{\mathfrak{t}}_m^R}\nabla\big[\Phi (W_s-W_{\tilde{\mathfrak{t}}_m^R})]\bfy(\tilde{\mathfrak{t}}_{m-1}^R)\,\dd s,\bfe_{m-1}\bigg\rangle_{L^2_x}\\
&=:{\mathrm{E}}^{2,1,1}_m+{\mathrm{E}}^{2,1,2}_m+{\mathrm{E}}^{2,1,3}_m.
\end{align*}
We notice that $\E\big[\sum_{n=1}^m{\mathrm{E}}^{2,1,3}_n\big]=0$.
Indeed, $\sum_{n=1}^m{\mathrm{E}}^{2,1,3}_n$ can be written as the decomposition of the $(\mathfrak F_{t_m})$-martingale
\begin{align*}
\sum_{n=1}^m\bigg\langle\int_{t_{n-1}}^{t_n}\nabla\big[\Phi (W_s-W_{t_n})]\bfy(t_{m-1})\,\dd s,\bfe_{m-1}\bigg\rangle_{L^2_x}
\end{align*}
with the $(\mathfrak F_{t_m})$-stopping time $\mathfrak m_R$ defined after \eqref{tdiscrtilde}. Hence it is an $(\mathfrak F_{t_m})$-martingale in its own right.
We estimate
\begin{align*}
\E\big[&{\mathrm{E}}^{2,1,1}_m\big]\\&\leq\,\E\bigg[\tilde{\tau}_{m}^R\sup_{s\in[\tilde{\mathfrak{t}}_{m-1}^R,\tilde{\mathfrak{t}}_m^R]}\|\nabla\big[\Phi (W_s-W_{\tilde{\mathfrak{t}}_m^R})]\|_{L^4_x}\sup_{s\in[\tilde{\mathfrak{t}}_{m-1}^R,\tilde{\mathfrak{t}}_m^R]}\|\bfy(s)\|_{L^4_x}\|\bfe_m-\bfe_{m-1}\|_{L^2_x}\bigg]\\
&\leq\,c(\delta){\tau}^2\E\bigg[\sup_{s\in[\tilde{\mathfrak{t}}_{m-1}^R,\tilde{\mathfrak{t}}_m^R]}\|\nabla^2\big[\Phi (W_s-W_{\tilde{\mathfrak{t}}_m^R})]\|^2_{L^2_x}\sup_{s\in[\tilde{\mathfrak{t}}_{m-1}^R,\tilde{\mathfrak{t}}_m^R]}\|\nabla\bfy(s)\|_{L^2_x}^2\bigg]\\&+\delta\E\big[\|\bfe_m-\bfe_{m-1}\|^2_{L^2_x}\big]\\
&\leq\,c(\delta){\tau}^2R_2(R)^2\E\bigg[\sup_{s\in[\tilde{\mathfrak{t}}_{m-1}^R,\tilde{\mathfrak{t}}_m^R]}\|\nabla^2\big[\Phi (W_s-W_{\tilde{\mathfrak{t}}_m^R})]\|^2_{L^2_x}\bigg]\\&+\delta\E\big[\|\bfe_m-\bfe_{m-1}\|^2_{L^2_x}\big]\\
&\leq\,c(\delta){\tau}^3R_2(R)^2+\delta\E\big[\|\bfe_m-\bfe_{m-1}\|^2_{L^2_x}\big],
\end{align*}
using Doob's maximal inequality, $\Phi\in L_2(\mathfrak U;W^{2,2}(\mt;\R^2))$ and Corollary \ref{cor:add} (recalling that $\tilde{\mathfrak t}_m^R\leq\mathfrak t_R$). Similarly, it holds
\begin{align*}
\E\big[&{\mathrm{E}}^{2,1,2}_m\big]\\
&\leq\,c(\delta)\tilde{\tau}_{m}^R\E\bigg[\sup_{s\in[\tilde{\mathfrak{t}}_{m-1}^R,\tilde{\mathfrak{t}}_m^R]}\|\nabla\big[\Phi (W_s-W_{\tilde{\mathfrak{t}}_m^R})]\|^2_{L^4_x}\sup_{s\in[\tilde{\mathfrak{t}}_{m-1}^R,\tilde{\mathfrak{t}}_m^R]}\|\bfy(s)-\bfy({\tilde{\mathfrak{t}}_{m-1}^R})\|_{L^2_x}^2\bigg]\\&+\delta\tilde{\tau}_{m}^R\E\big[\|\bfe_{m-1}\|^2_{L^4_x}\big]\\
&\leq\,c(\delta)\tau^2\bigg(\E\bigg[\sup_{s\in[\tilde{\mathfrak{t}}_{m-1}^R,\tilde{\mathfrak{t}}_m^R]}\|\nabla^2\big[\Phi (W_s-W_{\tilde{\mathfrak{t}}_m^R})]\|^4_{L^2_x}\bigg]\bigg)^{\frac{1}{2}}\bigg(\E\bigg[\int_{\tilde{\mathfrak{t}}_{m-1}^R}^{\tilde{\mathfrak{t}}_m^R}\|\partial_t\bfy(\xi)\|_{L^2_x}^4\,\dd\xi\bigg]\bigg)^{\frac{1}{2}}\\&+\delta\tilde{\tau}_{m}^R\E\big[\|\nabla\bfe_{m-1}\|^2_{L^2_x}\big]\\
&\leq\,c(\delta){\tau}^3R^4+\delta\tilde{\tau}_{m}^R\E\big[\|\nabla\bfe_{m-1}\|^2_{L^2_x}\big],
\end{align*}
using Corollary \ref{cor:add}.
Furthermore, we obtain
\begin{align*}
\E\big[{\mathrm{E}}^{2,2}_m\big]
&\leq\,\tilde{\tau}_{m}^R\E\bigg[\|\nabla\Phi (W_{\tilde{\mathfrak{t}}_{m-1}^R})\|_{L^4_x}\|\bfe_{m-1}\|_{L^2_x}\|\bfe_m\|_{L^4_x}\bigg]\\
&\leq\,c\tilde{\tau}_{m}^RR_2(R)\E\big[\|\bfe_m\|_{L^2_x}\|\bfe_m\|_{L^2_x}^{\frac{1}{2}}\|\nabla\bfe_m\|_{L^2_x}^{\frac{1}{2}}\big]\\&\leq\,c(\delta)\tilde{\tau}_{m}^RR_2(R)^2\E\big[\|\bfe_{m-1}\|_{L^2_x}^2+\|\bfe_m\|_{L^2_x}^2\big]+\delta\tilde{\tau}_{m}^R\E\big[\|\nabla\bfe_m\|_{L^2_x}^2\big]
\end{align*}
by definition of $\tilde{\mathfrak t}_m^R$.
The term ${\mathrm{E}}^{2,3}_m$ has to be splitted in the same fashion as
\begin{align*}
{\mathrm{E}}^{2,3}_m&=\bigg\langle\int_{\tilde{\mathfrak{t}}_{m-1}^R}^{\tilde{\mathfrak{t}}_m^R}\nabla\big[\Phi (W_{\tilde{\mathfrak{t}}_m^R}-W_{\tilde{\mathfrak{t}}_{m-1}^R})]\bfe_m\,\dd s,\bfe_m-\bfe_{m-1}\bigg\rangle_{L^2_x}\\
&+\bigg\langle\int_{\tilde{\mathfrak{t}}_{m-1}^R}^{\tilde{\mathfrak{t}}_m^R}\nabla\big[\Phi (W_{\tilde{\mathfrak{t}}_m^R}-W_{\tilde{\mathfrak{t}}_{m-1}^R})]\bfe_{m-1}\,\dd s,\bfe_{m-1}\bigg\rangle_{L^2_x}\\
&=:{\mathrm{E}}^{2,3,1}_m+{\mathrm{E}}^{2,3,2}_m,
\end{align*}
where again $\E\big[\sum_{n=1}^m{\mathrm{E}}^{2,3,2}_n\big]=0$.\footnote{Similarly to ${\mathrm{E}}^{2,1,3}_n$ we can write $\sum_{n=1}^m{\mathrm{E}}^{2,3,2}_n$ as the decomposition of a discrete martingale with $\mathfrak m_R$.}
Proceeding similarly to the estimates of ${\mathrm{E}}^{2,1}_m$ we have
\begin{align*}
\E\big[{\mathrm{E}}^{2,3,1}_m\big]
&\leq\,c(\delta){\tau}^2\bigg(\E\bigg[\sup_{s\in[\tilde{\mathfrak{t}}_{m-1}^R,\tilde{\mathfrak{t}}_m^R]}\|\nabla^2\big[\Phi (W_s-W_{\tilde{\mathfrak{t}}_m^R})]\|^4_{L^2_x}\bigg]\bigg)^{\frac{1}{2}}\big(\E\|\nabla\bfe_m\|_{L^2_x}^4\big)^{\frac{1}{2}}\\&+\delta\big[\E\|\bfe_m-\bfe_{m-1}\|^2_{L^2_x}\big]\\
&\leq\,c(\delta){\tau}^3R_2(R)^2+\delta\E\big[\|\bfe_m-\bfe_{m-1}\|^2_{L^2_x}\big].
\end{align*}
Finally, we have
\begin{align*}
\E&\big[{\mathrm{E}}^{2,4}_m\big]\\
&\leq\,\tilde{\tau}_{m}^R\bigg(\E\bigg[\|\nabla\big[\Phi (W_{\tilde{\mathfrak{t}}_m^R})]\|^4_{L^4_x}\bigg]\bigg)^{\frac{1}{4}}\bigg(\E\|\bfy(s)-\bfy(\tilde{\mathfrak{t}}_{m-1}^R)\|_{L^{2}_x}^4\bigg)^{\frac{1}{4}}\big(\E\big[\|\nabla\bfe_m\|_{L^2_x}^2\big]\big)^{\frac{1}{2}}\\&\leq\,c{\tau}^{2}\bigg(\E\int_{\tilde{\mathfrak{t}}_{m-1}^R}^s\|\partial_t\bfy(\xi)\|_{L^{2}_x}^4\,\dd\xi\bigg)^{\frac{1}{2}}\big(\E\big[\|\nabla\bfe_m\|_{L^2_x}^2\big]\big)^{\frac{1}{2}}\leq\,c(\delta){\tau}^4R^4+\delta\E\big[\tilde{\tau}_{m}^R\|\nabla\bfe_m\|_{L^2_x}^2\big]
\end{align*}
using Corollary \ref{cor:add} and $\Phi\in L_2(\mathfrak U;W^{2,2}(\mt;\R^2))$.
The estimates for $\mathrm{NLT}^{3}_m$ are analogues and lead to the same result.
For the final term we split
\begin{align*}
\langle{\mathrm{NLT}}^{4}_m,\bfe_{m}\rangle_{L^2_x}&=\bigg\langle\int_{\tilde{\mathfrak{t}}_{m-1}^R}^{\tilde{\mathfrak{t}}_m^R}\big(\nabla\big[\Phi W_s]\Phi W_s-\nabla\big[\Phi W_{\tilde{\mathfrak{t}}_m^R}]\Phi W_{\tilde{\mathfrak{t}}_{m-1}^R}\big)\,\dd s,\bfe_m-\bfe_{m-1}\bigg\rangle_{L^2_x}\\
&+\bigg\langle\int_{\tilde{\mathfrak{t}}_{m-1}^R}^{\tilde{\mathfrak{t}}_m^R}\big(\nabla\big[\Phi W_s]\Phi W_s-\nabla\big[\Phi W_{\tilde{\mathfrak{t}}_m^R}]\Phi W_{\tilde{\mathfrak{t}}_{m-1}^R}\big)\,\dd s,\bfe_{m-1}\bigg\rangle_{L^2_x}\\
&=:{\mathrm{E}}^{4,1}_m+{\mathrm{E}}^{4,2}_m.
\end{align*}
By Young's inequality we have for $\delta>0$ arbitrary
\begin{align*}
\E&[{\mathrm{E}}^{4,1}_m]\\&\leq \delta\E[\|\bfe_m-\bfe_{m-1}\|^2_{L^2_x}]+c(\delta)\E\bigg\|\int_{\tilde{\mathfrak{t}}_{m-1}^R}^{\tilde{\mathfrak{t}}_m^R}\big(\nabla\big[\Phi W_s]\Phi W_s-\nabla\big[\Phi W_{\tilde{\mathfrak{t}}_m^R}]\Phi W_{\tilde{\mathfrak{t}}_{m-1}^R}\big)\,\dd s\bigg\|^2_{L^2_x}\\
&\leq \delta\E[\|\bfe_m-\bfe_{m-1}\|^2_{L^2_x}]+c(\delta){\tau}^2\E\bigg[\sup_{s\in[\tilde{\mathfrak{t}}_{m-1}^R,\tilde{\mathfrak{t}}_m^R]}\big\|\nabla\big[\Phi W_s]\Phi W_s-\nabla\big[\Phi W_{\tilde{\mathfrak{t}}_m^R}]\Phi W_{\tilde{\mathfrak{t}}_{m-1}^R}\big\|^2_{L^2_x}\bigg]\\
&\leq \delta\E[\|\bfe_m-\bfe_{m-1}\|^2_{L^2_x}]+c(\delta){\tau}^{3}R_2(R)^2,
\end{align*}
using also Doob's maximal inequality, the definition of $\tilde{\mathfrak{t}}_m^R$ and $\Phi\in L_2(\mathfrak U;W^{3,2}(\mt;\R^2))$ (together with the Sobolev embedding $W^{3,2}(\mt)\hookrightarrow W^{1,\infty}(\mt)$). In order to estimate
${\mathrm{E}}^{4,2}$
we write
\begin{align*}
\E&[{\mathrm{E}}^{4,2}]\\&=\E\bigg[\bigg\langle\E\bigg[\int_{\tilde{\mathfrak{t}}_{m-1}^R}^{\tilde{\mathfrak{t}}_m^R}\nabla\big[\Phi W_s]\Phi W_s-\nabla\big[\Phi W_{\tilde{\mathfrak{t}}_m^R}]\Phi W_{\tilde{\mathfrak{t}}_{m-1}^R}\Big|\mathfrak F_{{t}_{m-1}}\bigg]\,\dd s,\bfe_{m-1}\bigg\rangle_{L^2_x}\bigg]\\
&\leq\E\bigg[\bigg\langle\E\bigg[\int_{\tilde{\mathfrak{t}}_{m-1}^R}^{\tilde{\mathfrak{t}}_m^R}\big\|\nabla\big[\Phi W_s]\Phi W_s-\nabla\big[\Phi W_{\tilde{\mathfrak{t}}_m^R}]\Phi W_{\tilde{\mathfrak{t}}_{m-1}^R}\big\|_{L^2_x}\Big|\mathfrak F_{{t}_{m-1}}\bigg]\,\dd s\,\|\bfe_{m-1}\|_{L^2_x}\bigg]\\
&\leq\delta\E\big[\tilde{\tau}_{m}^R\|\bfe_{m-1}\|_{L^2_x}^2\big]+c(\delta)\tau^2\E\bigg[\E\bigg[\sup_{t,s,\tau\in[t_{m-1},t_m]}\big\|\nabla\big[\Phi W_s]\Phi W_s-\nabla\big[\Phi W_{t}]\Phi W_{\tau}\big\|_{L^2_x}^2\bigg]\Big|\mathfrak F_{t_{m-1}}\bigg]\\
&=\delta\E\big[\|\bfe_{m-1}\|_{L^2_x}^2\big]+c(\delta)\tau^2\E\bigg[\sup_{t,s,\tau\in[t_{m-1},t_m]}\big\|\nabla\big[\Phi W_s]\Phi W_s-\nabla\big[\Phi W_{t}]\Phi W_{\tau}\big\|_{L^2_x}^2\bigg].
\end{align*}
Using Doob's maximal inequality and the assumption $\Phi\in L_2(\mathfrak U;W^{3,2}(\mt;\R^2))$ (as in the estimates for ${\mathrm{E}}^{4,1}_m$)
we conclude that
\begin{align*}
\E[{\mathrm{E}}^{4,1}]\leq\delta\E\big[\|\bfe_{m-1}\|_{L^2_x}^2\big]+c(\delta)R_2(R)^2{\tau}^3.
\end{align*}
Combining everything and choosing $\delta$ conveniently small we obtain
\begin{align*}
\E\bigg[&\bigg(\int_{\mt}|\bfe_{m}|^2 \dx +\tfrac{1}{2}\int_{\mt}|\bfe_{m}-\bfe_{m-1}|^2 \dx+\tilde{\tau}_{m}^R\tfrac{\mu}{2}\int_{\mt}|\nabla\bfe_{m}|^2\dx\bigg)\bigg]\\
&\leq \E\bigg[\int_{\mt}|\bfe_{m-1}|^2 \dx\bigg]
+cR_2(R)^2\E\bigg[\tilde{\tau}_{m}^R \int_{\mt}|\bfe_{m-1}|^2 \dx+\tilde{\tau}_{m}^R \int_{\mt}|\bfe_{m}|^2 \dx\bigg]\\
&+c {\tau}^2R_2(R)^2 \E\bigg[ \int_{\tilde{\mathfrak{t}}_{m-1}^R}^{\tilde{\mathfrak{t}}_m^R} \Vert \nabla \partial_t {\bf y}(s)\Vert^2_{L^2_x}\, {\rm d}s \bigg]+c{\tau}^{3}e^{cR_2(R)^4}.
\end{align*}
Iterating this inequality and noticing that 
\begin{align*}
\sum_{n=1}^M\E\bigg[ \int_{\tilde{\mathfrak{t}}_{n-1}^R}^{\tilde{\mathfrak{t}}_n^R} \Vert \nabla \partial_t {\bf y}(s)\Vert^2_{L^2_x}\, {\rm d}s \bigg]\leq \E\bigg[ \int_{0}^{\mathfrak{t}_R} \Vert \nabla \partial_t {\bf y}(s)\Vert^2_{L^2_x}\, {\rm d}s \bigg]\leq\,cR^4,
\end{align*}
by Corollary \ref{cor:add}
as well as
$\bfe_0=0$ yields
\begin{align*}
\max_{1\leq n\leq M}\E&\bigg[\int_{\mt}|\bfe_{n}|^2 \dx\bigg] +\E\bigg[\sum_{n=1}^M\tilde{\tau}_{n}^R\int_{\mt}|\nabla\bfe_{n}|^2\dx\bigg]\\
&\leq cK(R)^2\E\bigg[\sum_{n=1}^M\tilde{\tau}_{n}^R\int_{\mt}|\bfe_{n}|^2 \dx\bigg]+c{\tau}^{2}e^{cR_2(R)^4},
\end{align*}
Applying Gronwall's lemma shows
\begin{align*}
\max_{1\leq n\leq M}\E&\bigg[\int_{\mt}|\bfe_{n}|^2 \dx\bigg] +\E\bigg[\sum_{n=1}^M\tilde{\tau}_{n}^R\int_{\mt}|\nabla\bfe_{n}|^2\dx\bigg]\\
&\leq c{\tau}^{2}e^{cK(R)^4},
\end{align*}
which gives the claim.
\end{proof}

\section{Optimal weak order in probability for LBB-stable space-time discretisations of (\ref{eq:SNS})}\label{sec:fe}

Section \ref{sec:add}  validates the (strong) order (up to) $1$ in probability for semi-discretisation (\ref{tdiscr-add}) 
for $W^{1,2}_{0,\Div}(\mt;\R^2)$-valued iterates $({\bf y}_m)_m$; the
scheme was set up to confine to approximating only the {\em timely regular part} ${\bf y}$ of the strong solution ${\bf u} = {\bf y} + \Phi W$ of (\ref{eq:SNS}). We recall that ${\bf u}_m := {\bf y}_m  + \Phi W(t_m)$ solves (\ref{tdiscr'}).
We now propose and analyse an implementable spatial discretisation of (\ref{tdiscr-add}) based on the finite element method. 

The study of temporal semi-discretisations in Section \ref{sec:add} involves exactly divergence-free iterates, and also the used test functions are from the same space $W^{1,2}_{0,\Div}(\mt,\R^2)$, which allowed to eliminate the pressure from the problem. This is not the case any more in the setting of a spatio-temporal discretisation --- {\em e.g.}~via well-known LBB-stable mixed finite element pairings $({V}^{h}(\mt;\R^2), P^{h}(\mt))$ on quasi-uniform meshes ${\mathscr T}_h$ of size $0 < h \ll 1$  covering ${\mathcal O}$, which, in particular, involve the space $P^h(\mt)$ for discrete pressures. 

We work with a standard finite element set-up for incompressible fluid mechanics, see {\em e.g.}~\cite{GR}.
We denote by $\mathscr{T}_h$ a quasi-uniform subdivision of $\mt$ into triangles of maximal diameter $h>0$.   
For $K\subset \mt$ and $\ell\in \setN _0$ we denote by
$\mathscr{P}_\ell(K)$ the polynomials on $K$ of degree less than or equal
to $\ell$. 
Let us
characterize the finite element spaces $V^h(\mt;\R^2)$ and $P^h(\mt)$ as
\begin{align*}
  V^h(\mt;\R^2)&:= \set{\bfV_h \in W^{1,2}_0(\mt;\R^2)\,:\, \bfV_h|_{K}
    \in \mathscr{P}_i(K;\R^2)\,\,\forall K\in \mathscr T_h},\\
P^h(\mt)&:=\set{R_h \in L^{2}(\mt)/ \R\,:\, R_h|_{K}
    \in \mathscr{P}_j(K)\,\,\forall K\in \mathscr T_h},
\end{align*}
for some $i,j\in\N_0$.
%
In order to guarantee stability of our approximations we relate $V^h(\mt;\R^2)$ and $P^h(\mt)$ by the discrete LBB-condition, that is we assume that
\begin{align*}
\sup_{\bfV_h\in V^h(\mt;\R^2)} \frac{\int_{\mt}\Div\bfV_h\,R_h\dx}{\|\nabla\bfV_h\|_{L^2_x}}\geq\,C\,\|R_h\|_{L^2_x}\quad\,\forall R_h\in P^h(\mt),
\end{align*}
where $C>0$ does not depend on $h$. This gives a relation between $i$ and $j$ (for instance the choice $(i,j)=(1,0)$ is excluded, whereas $(i,j)=(2,0)$ is allowed).
Finally, we define the space of discretely solenoidal finite element functions by 
\begin{align*}
  V^h_{\Div}(\mt;\R^2)&:= \bigg\{\bfV_h\in V^h(\mt;\R^2):\,\,\int_{\mt}\Div\bfV_h\,R_h\dx=0\,\quad\forall R_h\in P^h(\mt)\bigg\}.
\end{align*}
Let $\Pi_h:L^2(\mt;\R^2)\rightarrow V_{\Div}^h(\mt;\R^2)$ be the $L^2(\mt;\R^2)$-orthogonal projection onto $V_{\Div}^h(\mt;\R^2)$; see \cite{HeyRa1}, for instance. 
We are now ready to define an LBB-stable mixed FEM for (\ref{tdiscr-add}) based on mixed finite elements: Let ${\bf Y}_0 \in V^h_{\Div}(\mt;\R^2)$. For any $m \geq 1$, find
 $V_{\rm div}^h({\mathcal O};\R^2) \times P^h({\mathcal O})$-valued random variables $({\bf Y}_m, P_m)$ such that ${\mathbb P}$-a.s.
\begin{align}\label{tdiscr-add-fem1}
&\int_{\mt} \bigl({\bf Y}_{m}- {\bf Y}_{m-1}\bigr) \cdot \pmb{\varphi} \dx  + \mu \tau \int_{\mt}\nabla {\bf Y}_m: \nabla \pmb{\varphi} \dx 
 - \tau \int_{\mt}  P_m \cdot {\rm div}\, \pmb{\varphi} \dx\\
& \quad  =\nonumber
  \tau \int_{\mt} \Big(\mu \nabla [\Phi W(t_m)] : \nabla \pmb{\varphi} 
  - \bigl((\nabla {\bf Y}_{m}){\bf Y}_{m-1} + \frac{1}{2}[{\rm div}\, {\bf Y}_{m-1}] {\bf Y}_m\bigr) \cdot \pmb{\varphi}\Big)\dx\\
 &\qquad + \int_{\mt}
  \widetilde{\mathcal L^m}(\bfY_{m-1},\bfY_m)  \cdot \pmb{\varphi} \dx\nonumber
 , \\ \label{tdiscr-add-fem2}
&\int_{\mt} {\rm div} {\bf Y}_{m} \, R \dx = 0,
\end{align}
for all $(\pmb{\varphi}, R) \in V_{\rm div}^h({\mathcal O};\R^2) \times P^h({\mathcal O})$, where the following map approximates $\mathcal L^{m}$ from (\ref{tdiscr-add}),
\begin{align}\nonumber\widetilde{\mathcal L^m}(\bfY_{m-1},\bfY_m)  &=\nabla \Pi_h[\Phi W(t_m)]{\bf Y}_{m-1} + \nabla {\bf Y}_m \Pi_h\bigl( \Phi W(t_{m-1})\bigr) \\ \label{midi1}
& - \nabla \Pi_h\bigl[\Phi W(t_m)\bigr] \Pi_h \Phi W(t_{m-1}) + \frac{1}{2} [{\rm div}\, {\bf Y}_{m-1}] \Pi_h[\Phi W(t_m)]\\ 
&\nonumber
+ \frac{1}{2} \Bigl[{\rm div}\, \Pi_h \bigl[\Phi W(t_{m-1})\bigr]\Bigr] \Pi_h \bigl[\Phi W(t_m)\bigr]
+ \frac{1}{2}\Bigl [ {\rm div}\, \Pi_h\bigl[\Phi W(t_{m-1})\bigr]\Bigr]{\bf Y}_m
\, .
\end{align}
The additional term
$\frac{1}{2}\int_{\mt} [{\rm div}\, {\bf Y}_{m-1}] {\bf Y}_m  \cdot \pmb{\varphi}\dx$ in (\ref{tdiscr-add-fem1})
is used to control nonlinear effects in the presence of {\em discretely} divergence-free velocity iterates, as do the last three terms in (\ref{midi1}). The reason why the scheme is set up in this form will become clear if we consider the $V_{\Div}^h(\mt;\R^2)$-valued random variable
\begin{equation}\label{fem-9}
{\bf U}_m := {\bf Y}_m + \Pi_h \bigl[\Phi W(t_{m})\bigr]\,,
\end{equation}  which solves the following equation
\begin{align}\label{txdiscrpia}
\begin{aligned}
\int_{\mt}\bfU_{m}\cdot\bfvarphi \dx &+\tau\int_{\mt}\big((\nabla\bfU_{m})\bfU_{m-1}+(\Div\bfU_{m-1})\bfU_{m}\big)\cdot\bfphi\dx\\
&+\mu\,\tau\int_{\mt}\nabla\bfU_{m}:\nabla\bfphi\dx-\tau\int_{\mt}P_{m}\,\Div\bfvarphi\dx\\&=\int_{\mt}\bfU_{m-1}\cdot\bfvarphi \dx+\int_{\mt}\Phi\,\Delta_mW\cdot \bfvarphi\dx
\end{aligned}
\end{align}
for all $\pmb{\varphi}  \in V_{\rm div}^h({\mathcal O};\R^2)$. This combined spatio-temporal discretisation with $V^h_{\Div}(\mt;\R^2)$-valued $({\bf U}_m)_m$ has been investigated in 
\cite{BrPr} in the more general context of multiplicative noise in (\ref{eq:SNS}); the key observation now  is that for the spatial error analysis for (\ref{txdiscrpia}) it suffices to bound $\max_m \Vert {\bf Y}_m - {\bf y}_m\Vert_{L^2_x}$ thanks to (\ref{fem-9}), which only involves spatially regular noise. Therefore, 
 we isolate from the convergence proof for \eqref{tdiscr-add-fem1}--\eqref{tdiscr-add-fem2} {\em those} arguments from 
\cite{BrPr} for (\ref{txdiscrpia}) which address the {\em spatial discretisation} only. We have teh following result.
%
%
 \begin{theorem}\label{thm:maintdiscra}
Assume that $\bfu_0\in L^r(\Omega,W^{2,2}_{\Div}(\mt;\R^2))\cap L^{5r}(\Omega,W^{1,2}_{\Div}(\mt;\R^2))$ for some $r\geq8$ and that $\Phi\in L_2(\mathfrak U;W^{1,2}_{0,\Div}\cap W^{3,2}(\mt;\R^2))$. Let $$(\bfu,(\mathfrak{t}_R)_{R\in\N},\mathfrak{t})$$ be the unique global maximal strong solution to \eqref{eq:SNS} from Theorem \ref{thm:inc2d}.
Then we have for any $\xi>0$, $\alpha<1$ 
\begin{align*}
&\max_{1\leq m\leq M}\mathbb P\bigg[\|\bfy(t_m)-\bfY_{m}\|_{L^2_x}^2+\sum_{n=1}^m \tau\|\nabla\bfy(t_n)-\nabla\bfY_{n}\|_{L^2_x}^2>\xi\,(\tau^{2\alpha} + h^{2\alpha})\bigg]\rightarrow 0
\end{align*}
as $\tau,h\rightarrow0$,
where $\bfy$ is the solution
to \eqref{eq:SNSy} and $(\bfY_{m})_{m=1}^M$ is the solution to \eqref{tdiscr-add-fem1}--\eqref{tdiscr-add-fem2}.
\end{theorem}
Let $\tilde\bfu_0^R:=\bfu_0$.
To prepare for the proof of Theorem \ref{thm:maintdiscra} we introduce
the $W^{1,2}_{0,\Div}(\mt;\R^2)$-iterates $(\tilde{\bfu}_{m}^R)_{m=1}^M$ solving for $m\geq1$
\begin{align}\label{tdiscrR}
\begin{aligned}
\int_{\mt}&\tilde{\bfu}_{m}^R\cdot\bfvarphi \dx +\tilde{\tau}_{m}^R\int_{\mt}(\nabla\tilde{\bfu}_{m}^R)\bfu^R_{m-1}:\nabla\bfphi\dx\\
&+\mu\,\tilde{\tau}_{m}^R\int_{\mt}\nabla\tilde{\bfu}_{m}^R:\nabla\bfphi\dx=\int_{\mt}\tilde{\bfu}_{m-1}^R\cdot\bfvarphi \dx+\frac{\tilde{\tau}_{m}^R}{\tau}\int_{\mt}\Phi\,\Delta_m W\cdot \bfvarphi\dx,
\end{aligned}
\end{align}
for every $\bfphi\in W^{1,2}_{0,\Div}(\mt;\R^2)$,
where
$\tilde{\bfu}_{m}^R=\bfu_{m}$ in $\{t_m\leq \tilde{\mathfrak t}_{m}^R\}$ with $\tilde{\mathfrak t}_{m}^R$ and $\tilde{\tau}_{m}^R$ introduced in \eqref{eq:tmR}. Accordingly, we define
$V^h_{\Div}({\mathcal O};\R^2)$-valued iterates $(\tilde{\bfU}_{m}^R)_m$ solving
\begin{align}\label{tdiscrRd}
\begin{aligned}
\int_{\mt}&\tilde{\bfU}_{m}^R\cdot\bfvarphi \dx +\tilde{\tau}_{m}^R\int_{\mt}\big((\nabla\tilde\bfU^R_{m})\tilde\bfU^R_{m-1}+(\Div\tilde\bfU^R_{m-1})\tilde\bfU^R_{m}\big)\cdot\bfphi\dx\\
&+\mu\,\tilde{\tau}_{m}^R\int_{\mt}\nabla\tilde{\bfU}_{m}^R:\nabla\bfphi\dx=\int_{\mt}\tilde{\bfU}_{m-1}^R\cdot\bfvarphi \dx+\frac{\tilde{\tau}_{m}^R}{\tau}\int_{\mt}\Phi\,\Delta_m W\cdot \bfvarphi\dx,
\end{aligned}
\end{align}
for every $\bfphi\in  V_{\rm div}^h({\mathcal O};\R^2)$. Again, $\tilde{\bfU}_{m}^R=\bfU_{m}$ in $[t_m\leq \tilde{\mathfrak t}_{m}^R]$.

Next, by setting $R=c^{-1/4}\sqrt[4]{-\varepsilon \log  h}$  where $\varepsilon>0$ is arbitrary, we have for any $\xi>0$
\begin{align*}
&\mathbb P\bigg[\max_{1\leq m\leq M}\|\bfu_m -\bfU_{m}\|_{L^2_x}^2+\sum_{m=1}^M \tau\|\nabla\bfu_m -\nabla\bfU_{m}\|_{L^2_x}^2>\xi\, h^{2-2\varepsilon}\bigg]\\
&\quad \leq \mathbb P\bigg[\max_{1\leq m\leq M}\| \tilde{\bfu}_{m}^R-\tilde{\bfU}_{m}^R\|_{L^2_x}^2+\sum_{m=1}^M \tilde{\tau}_{m}^R\|\nabla\tilde{\bfu}_{m}^R - \nabla\tilde{\bfU}_{m}^R\|_{L^2_x}^2>\xi\, h^{2-2\varepsilon} \bigg]\\
&\qquad +\mathbb P\big[\{\tilde{\mathfrak t}^{\tt d}_R<T\}\big]\, .
\end{align*}
The last term vanishes $\p$-a.s.~by \eqref{eq:2008} as $R \rightarrow \infty$; for the first term to converge to zero for $h \rightarrow 0$, we use Markov's inequality, and the bound
\begin{equation}\label{etim1} {\mathbb E}\bigg[\max_{1\leq m\leq M}\|\tilde\bfu^R_m -\tilde\bfU^R_{m}\|_{L^2_x}^2+\sum_{m=1}^M \tilde\tau_m^R\|\nabla\tilde\bfu^R_m -\nabla\tilde\bfU^R_{m}\|_{L^2_x}^2\bigg] 
\leq  c e^{cR^4}h^2,
\end{equation}
whose proof will be sketched.
Since the error between $\Pi_h\Phi W$ (from \eqref{fem-9}) and $\Phi W$ (from \eqref{identd1}) can be controlled by the assumed regularity of $\Phi$ and the approximation properties of $\Pi_h$, we conclude that
\begin{align*}
&\mathbb P\bigg[\max_{1\leq m\leq M}\|\bfy_m -\bfY_{m}\|_{L^2_x}^2+\sum_{m=1}^M \tau\|\nabla\bfy_m -\nabla\bfY_{m}\|_{L^2_x}^2>\xi\, h^{2-2\varepsilon}\bigg]\rightarrow0,
\end{align*}
which yields
the claim of Theorem \ref{thm:maintdiscra} due to Theorem \ref{thm:3.1tilde}.

We are left with the proof of \eqref{etim1}.
We define the error $\bfE^R_{m}= \tilde{\bfu}_{m}^R-\tilde{\bfU}_{m}^R$. Subtracting \eqref{tdiscrRd} from 
\eqref{tdiscrR} and using the additive character of the noise gives
for every $\bfphi\in V_{\Div}^h(\mt;\R^2)$
\begin{align}\label{eqn-BB}
\begin{aligned}
\int_{\mt}&\tilde{\bfE}_{m}^R\cdot\bfvarphi \dx + \mu \tilde{\tau}_{m}^R\int_{\mt} \nabla\tilde{\bfE}_{m}^R:\nabla\bfphi\dx\\&=\int_{\mt}\bfE^R_{m-1}\cdot\bfvarphi \dx + {\tilde{\tau}_{m}^R \int_{\mt}\tilde p^R_m \,\Div\bfphi\dx}
\\
&\quad -\tilde{\tau}_{m}^R\int_{\mt}\Big((\nabla\tilde{\bfu}_{m}^R) \tilde{\bfu}_{m-1}^R)-\big((\nabla\tilde\bfU^R_{m})\tilde\bfU^R_{m-1}+(\Div\tilde\bfU^R_{m-1})\tilde\bfU^R_{m}\big)\Big)\cdot\bfphi\dx\, ,
\end{aligned}
\end{align}
with
\begin{align*}
\tilde{p}_m^R:=\begin{cases} p_m,\quad \text{in}\quad [t_m=\tilde{\mathfrak t}_m^R],\\
p_{\mathfrak m_{R}},\quad \text{in}\quad [t_m> \tilde{\mathfrak{t}}_m^R],
\end{cases}
\end{align*}
where $\mathfrak m_R$ is defined below \eqref{eq:tRd}.
Here $p_m$ from (\ref{tdiscrRp1}) enters due to only {\em discretely} solenoidal test-functions.
Setting $\bfphi=\Pi_h\tilde{\bfE}_{m}^R$, standard manipulations then lead to
\begin{align*}
\int_{\mt}&\frac{1}{2}\big(|\Pi_h\tilde{\bfE}_{m}^R|^2-|\Pi_h\tilde{\bfE}_{m-1}^R|^2+|\Pi_h\tilde{\bfE}_{m}^R-\Pi_h\tilde{\bfE}_{m-1}^R|^2\big) \dx+\mu\tilde{\tau}_{m}^R\int_{\mt}|\nabla\tilde\bfE^R_{m}|^2\dx\\
&=-\mu \tilde{\tau}_{m}^R\int_{\mt}\nabla\tilde\bfE^R_{m}:\nabla\big(\tilde{\bfu}_{m}^R-\Pi_h\tilde{\bfu}_{m}^R)\big)\dx  + {\tilde{\tau}_{m}^R \int_{\mt}\tilde p^R_m\,\Div \Pi_h\tilde{\bfE}_{m}^R\dx}\\
&\quad -\tilde{\tau}_{m}^R\int_{\mt}\Big((\nabla\tilde{\bfu}_{m}^R) \tilde{\bfu}_{m-1}^R)-\big((\nabla\tilde\bfU^R_{m})\tilde\bfU^R_{m-1}+(\Div\tilde\bfU^R_{m-1})\tilde\bfU^R_{m}\big)\Big)\cdot \Pi_h\tilde{\bfE}_{m}^R\dx\\
&=:I_1(m)+\dots +I_3(m)\, .
\end{align*}
We use well-known approximation properties of $\Pi_h$ to bound
\begin{equation*}
I_1(m)\leq  \frac{\mu}{2}\tilde{\tau}_{m}^R\int_{\mt}|\nabla\tilde\bfE^R_{m}|^2\dx+c\tilde{\tau}_{m}^Rh^2\int_{\mt}|\nabla^2\tilde{\bfu}_{m}^R|^2\dx\,,
\end{equation*}
and (\ref{lem:3.1b}) may now be applied to optimally bound the last term. A corresponding longer, but elementary estimation which rests on the same ingredients bounds $I_3(m)$; see also \cite[bounds for term $I_4(m)$ in the proof of Thm.~4.2]{BrPr}. For $I_2(m)$, we use well-known approximation properties for the $L^2$-projection $Q_h: L^2({\mathcal O}) \rightarrow P^h(\mt)$, and (\ref{tdiscr-add-fem2}),  and (\ref{tdiscrRp}) to conclude
\begin{align}\nonumber
I_2(m) &= \tilde{\tau}_{m}^R \int_{\mt} \bigl(\tilde p^m - Q_h \tilde p^m)\,\Div \Pi_h\tilde{\bfE}_{m}^R\dx\\
&\leq \kappa\tilde{\tau}_{m}^R\int_{\mt}|\nabla\tilde\bfE^R_{m}|^2\dx +c(\kappa)\tilde{\tau}_{m}^Rh^2\int_{\mt}|\nabla \tilde p_{m}^R|^2\dx\,;
\end{align}
see again \cite[bound for term $I_7(m)$ in the proof of Thm.~4.2]{BrPr}. The expectation of the sum of the leading term is bounded by $c e^{cR^4}$, due to (\ref{tdiscrRp}).
The proof of \eqref{etim1} (and hence that of Theorem \ref{thm:maintdiscra}) is thus complete.

\begin{remark}\label{rem:A}
For the stochastic Stokes equation instead of \eqref{eq:SNS}, and a corresponding simplification of
               scheme \eqref{tdiscr-add-fem1}--\eqref{tdiscr-add-fem2}, we have for all $\alpha<1$
               \begin{align}
 \label{new1}              \max_{1 \leq m \leq M} {\mathbb E}\biggl[ \Vert \bfy(t_m) - {\bf Y}_m\Vert^2_{L^2} +\sum_{n=1}^m \tau\|\nabla\bfy(t_n) -\nabla\bfY_{n}\|_{L^2_x}^2\biggr]
                \leq c (\tau^{2 } + h^{2}) .
                \end{align}
The first part of \eqref{new1}, which addresses semi-discretisation in time results, starts from error identity \eqref{erro1}
in simplified form -- without the term $\mathrm{NLT}_m$. Furthermore, no (discrete) stopping times are needed, the stopping times in the counterparts of Lemmas \ref{lem:regadditive}, \ref{lem-rand1}, and 
Corollary \ref{cor:add} are all equal to $T$. The second part of \eqref{new1} on semi-discretisation in space starts from
error identity \eqref{eqn-BB}
in simplified form, without the last term that addresses the role of the nonlinearity. Again, involved 
stopping times are all $T$, which settles \eqref{new1}. 
\end{remark}

\end{document}